\documentclass[10pt,twoside,english,reqno,a4paper]{amsart}

\usepackage[footskip=10mm]{geometry}
\usepackage{datetime}
\usepackage{fancyhdr}
\usepackage{amsmath}

\usepackage{listings,graphicx,varioref,color,bm,stmaryrd}

\usepackage{amsmath}

\usepackage{amsfonts}
\usepackage{calrsfs}

\usepackage{amssymb}
\usepackage{mathrsfs}
\usepackage{makecell}
\usepackage[mathscr]{euscript}

\usepackage{mathpazo} 
\usepackage{courier}

\usepackage{amscd}

\usepackage[T1]{fontenc}
\usepackage{graphics}
\usepackage[english]{babel}
\usepackage[utf8]{inputenc}
\usepackage[makeroom]{cancel}
\usepackage{pifont}
\usepackage{marvosym}
\usepackage{array}
\usepackage{datetime}
\usepackage[numbers]{natbib}

\definecolor{caribbeangreen}{rgb}{0.0, 0.8, 0.6}
\definecolor{darkpastelgreen}{rgb}{0.01, 0.75, 0.24}
\definecolor{green(pigment)}{rgb}{0.0, 0.65, 0.31}
\usepackage[dvipsnames]{xcolor}
\usepackage{hyperref}
\hypersetup{
colorlinks=true,
linkcolor= red!70!black,
citecolor= Orange,
urlcolor=Blue, 
}

\addto\extrasenglish{%
}

\addto\extrasenglish{%
}

\addto\extrasenglish{%
}

\usepackage[capitalize,nameinlink,noabbrev,nosort]{cleveref} 

\usepackage{tikz-cd}
\usetikzlibrary{patterns}
\usepackage{enumerate}
\usepackage{enumitem}
\usepackage{graphics}
\usepackage{pgfplots}
\pgfplotsset{width=7cm,compat=newest}
\usepackage{graphics}
\usepackage{tikz}
\usetikzlibrary{shapes.geometric}
\usepackage{tikz-cd}
\usepackage{caption}
\usepackage{subcaption}

\usepackage{float}

\newtheoremstyle{mytheoremstyle} 
{1em}                    
{1em}                    
{\slshape}                   
{}                           
{\bf}             
{.}                          
{.5em}                       
{}  

\theoremstyle{mytheoremstyle}
\newtheorem{theorem}{Theorem}[section]
\newtheorem{proposition}[theorem]{Proposition}

\newtheorem{remark}[theorem]{Remark}

\numberwithin{equation}{section}
\long\def\salta#1{\relax}

\usepackage{kantlipsum}
\allowdisplaybreaks

\usepackage[normalem]{ulem}               
\newcommand\redout{\bgroup\markoverwith
{\textcolor{bor}{\rule[0.5ex]{2pt}{0.8pt}}}\ULon}

\makeatletter

\def\og{\leavevmode\raise.3ex\hbox{$\scriptscriptstyle\langle\!\langle$~}}
\def\fg{\leavevmode\raise.3ex\hbox{~$\!\scriptscriptstyle\,\rangle\!\rangle$}}

\usepackage{scalerel,stackengine}
\stackMath
\makeatletter
\newcommand\reallywidehat[1]{%
\savestack{\tmpbox}{\stretchto{%
  \scaleto{%
    \scalerel*[\widthof{\ensuremath{#1}}]{\kern.1pt\mathchar"0362\kern.1pt}%
    {\rule{0ex}{\textheight}}
  }{\textheight}%
}{2.4ex}}%
\stackon[-6.9pt]{#1}{\tmpbox}%
}
\makeatother

\def\TT{{\mathbb{T}}}
\def\ZZ{{\mathbb{Z}}}
\def\pat{\partial_t}
\def\pa{\partial}

\def\fg{\widehat{G}}
\def\tn{\mathcal{N}}

\newcommand{\Z}{\mathbb{Z}}

\newcommand{\Vast}{\bBigg@{3}}
\newcommand{\vast}{\bBigg@{2}}
\newcommand{\vvast}{\bBigg@{1.5}}

\def\de{\delta}

\def\D{\Delta }
\def\vp{\varphi}

\def\al{\alpha}
\def\de{\delta}
\def\vp{\varphi}

\def\si{\sigma}

\newcommand{\NN}{\mathbb{N}}

\newcommand{\ds}{\displaystyle}

\DeclareMathOperator{\R}{\mathbb{R}}
\def\qq{\qquad}
\def\q{\quad}

\definecolor{bor}{cmyk}{0.21,0.93,0.86,0.12}
\definecolor{air}{rgb}{0.178, 0.51, 0.51}
\definecolor{range}{cmyk}{0,0.599,1,0.188}

\def\ds{\displaystyle}

\newcommand{\pare}[1]{\left(#1\right)}
\newcommand{\bra}[1]{\left[#1\right]}
\newcommand{\set}[1]{\left\{#1\right\}}
\newcommand{\av}[1]{\left|#1\right|}
\newcommand{\norm}[1]{\|#1\|}
\newcommand{\w}[1]{\widehat{#1}}

\renewcommand{\t}[1]{\text{#1}}

\newcommand{\m}[1]{\mathcal{#1}}

\def\cu{c_1}
\def\cd{c_2}

\usepackage{comment}
\usepackage{dsfont}

\newcommand{\tbf}{\textbf}
\newcommand{\tit}{\textit}

\newcommand{\tsl}{\textsl}

\newcommand{\mc}{\mathcal}
\newcommand{\mf}{\mathfrak}
\newcommand{\mds}{\mathds}

\newcommand{\veps}{\varepsilon}
\newcommand{\what}{\widehat}

\newcommand{\g}{\gamma}

\newcommand{\s}{\sigma}
\renewcommand{\o}{\omega}

\newcommand{\lan}{\langle}
\newcommand{\ran}{\rangle}
\newcommand{\T}{\mathbb{T}}
\renewcommand{\P}{\mathbb{P}}

\renewcommand{\div}{{\rm div}\,}

\newcommand{\supp}{{\rm supp}\,}

\newcommand{\loc}{{\rm loc}}

\allowdisplaybreaks

\def\d{\partial}
\def\div{{\rm div}\,}

\newcommand{\fra}[1]{\textcolor{blue}{[[FF: #1]]}}

\makeatletter
\@namedef{subjclassname@2020}{\textup{2020} Mathematics Subject Classification}
\makeatother

\theoremstyle{definition}
\newtheorem{example}[theorem]{Example}

\author[F. Fanelli]{Francesco Fanelli}

\author[R. Granero-Belinchón]{Rafael Granero-Belinchón}

\author[M. Magliocca]{Martina Magliocca}

\author[M. Pageard]{Matthieu Pageard}

\address[F. Fanelli]{Basque Center for Applied Mathematics, Alameda de Mazarredo 14, E-48009 Bilbao, Basque Country, Spain \\ and
Ikerbasque -- Basque Foundation for Science, Plaza Euskadi 5, E-48009 Bilbao, Basque Country, Spain \\ and
Universit\'e Claude Bernard Lyon 1, ICJ UMR5208, F-69622 Villeurbanne, France. 
 \url{ffanelli@bcamath.org} }

\address[R. Granero-Belinch\'on]{Departamento de Matem\'aticas, Estad\'istica y Computaci\'on, Universidad de Cantabria. Avda. Los Castros s/n, E-39005 Santander, Spain
 \url{rafael.granero@unican.es}}

\address[M. Magliocca]{Department of Mathematical Analysis,
Faculty of Mathematics, University of Sevilla,
C/Tarfia s/n, Campus Reina Mercedes,
Sevilla 41012, Spain.
 \url{mmagliocca@us.es}}

 \address[M. Pageard]{Université Lyon 1, Centrale Lyon, INSA Lyon, Université Jean Monnet, CNRS, ICJ UMR5208, 69622 Villeurbanne, France.
 \url{pageard@math.univ-lyon1.fr} }

\keywords{fourth order parabolic equations; quasi-linear equations; global well-posedness; Wiener spaces.}

\subjclass[2020]{35G25 (primary); 
35K59, 
35A01, 
35Q99 (secondary). 
}

\usepackage{enumitem}
\setlist[itemize]{leftmargin=4mm}

\begin{document}

\title{Remarks on some quasi-linear fourth order parabolic equations \\ arising in mathematical physics}

\date\today 

\begin{abstract}
This note proves existence and uniqueness of strong solutions to a broad class of fourth-order quasi-linear parabolic equations in the class of
Wiener spaces. It improves upon previous results in the literature, devoted to some special cases falling within the general framework developed here,
in which uniqueness held true in a smaller class than the space where existence was proven.
Gain of analyticity, as well as generalisations to higher-order equations and to propagation of higher regularities, are also discussed.
\end{abstract}

\maketitle
\setcounter{tocdepth}{1}
\tableofcontents

\section{Introduction}

The goal of the present note is to establish a unified approach to study the well-posedness of scalar parabolic equations of the form
\begin{equation} \label{eq:parab}
 \left\{ \begin{array}{l}
          \d_tu\,+\,\alpha\,\Delta^2u\,=\,N(t,x,u,Du,\ldots,D^4u), \\[1ex]
          u\bigl|_{t=0}\,=\,u_0,
         \end{array}
 \right.
\end{equation}
with $\al>0$, under suitable structural assumptions on the non-linearity $N$. Here, $t\in\R_+$ denotes the time variable and $x\in\Omega\subset\R^d$, with $d\geq1$,
the space variable. In order to simplify the study and really focus
on the structure of the non-linearity $N$, we assume the space domain to be the $d$-dimensional torus, that is
\[
\Omega\,=\,\T^d\,, 
\qquad\qquad \mbox{ with }\quad d\,\geq\,1\,.
\]
Here, we take the normalisation for the torus $\T^d$ given by $\T^d\,:=\,[-\pi,\pi]^d/\sim$, where $\sim$ is the equivalence relation which identifies
the points $-\pi\,e_j$ and $\pi\,e_j$ for all $1\leq j\leq d$, with $e_j$ being the unit vector directed along the $j$-th axis.

In system \eqref{eq:parab}, the notation $D^ku$, for $k\in\NN$, stands for the vector of space derivatives of order $k$, namely terms of the form $\d^\beta_x u$,
for multi-indices $\beta\in\NN^d$ such that $|\beta|=k$. When $k=1$, we will alternatively use the notation $Du$ and $\nabla u$ to denote the vector of
derivatives of order $1$, whereas we will denote the variable of differentiation, \tsl{e.g.} $\nabla_x$, when needed.

Equations of the same kind as \eqref{eq:parab} typically describe evolution of free surfaces. They naturally arise in many fields of applied mathematics, ranging from
crystal surfaces \citep{krug1995adatom, shehadeh2011evolution}
to thin films appearing in wetting-dewetting processes \cite{khenner2018modeling} and
epitaxial growth models \citep{escudero2023explicit, escudero2015global}, for instance.
We refer \tsl{e.g.} to \cite{granero2018global, magliocca2024fourth, escudero2015global, granero2024global, escudero2013radial}
and references therein for some rigorous mathematical studies of those models.
We also refer to  \tsl{e.g.} \cite{Gall-Moussa} and references therein for recent advances on quasi-linear parabolic equations and systems in which
the diffusion matrix is not uniformly elliptic.

\medbreak
In order to better explain the main motivations behind this work, let us specify the mathematical setting we will adopt throughout the paper.

In order to treat the non-linearity appearing on its right-hand side, we study the well-posedness of \eqref{eq:parab} in critical spaces entering the class of
Wiener spaces. More precisely, for any real number $s\geq0$, we define the (homogeneous) Wiener space $A^s(\T^d)$ as
\begin{align*}\label{Wienerhomo}
{{A}}^s(\TT^d)\, &:=\,
\left\{u\,\in\, L^1(\TT^d)\quad \Big|\quad  \|u\|_{A^s}\,:=\,\sum_{k\in\ZZ^d} |k|^s\;|\widehat{u}(k)|\,<\,+\,\infty\;\right\}\,.
\end{align*}
%
Then, given an initial datum
\begin{equation} \label{hyp:in-datum}
u_0\,\in\,A^0(\T^d) \,,
\end{equation}
we will look for solutions to the Cauchy problem \eqref{eq:parab} in the space
\begin{equation} \label{def:X_{0,4}}
X_{0,4}\,:=\,
\mc C_b\big(\!\R_+;A^0(\T^d)\big)\,\cap\,L^1\big(\!\R_+;A^4(\T^d)\big)\,,
\end{equation}
where we have set $\mc C_b\,:=\,\mc C\,\cap\,L^\infty$.
For later use, we also introduce the local-in-time version of that space, namely the space
\[
X_{0,4}(T)\, :=\,\mc C\big([0,T];A^0(\T^d)\big)\,\cap\,L^1\big([0,T];A^4(\T^d)\big)\,.
\]
Let us stress the fact that, by definition, the quantity $\|u\|_{A^s}$ defined above is a homogeneous semi-norm, in the sense that it does not provide any information on the
low-frequency term $\what{u}(0)$, or equivalently on the spatial
average $\lan u\ran $ of $u$ over $\T^d$, which will then need to be treated separately.

The main result of this paper, rigorously formulated in Section \ref{s:results} below,
can be roughly stated in the following way:


\tit{for a large class of non-linearities $N$, given an initial datum $u_0\in A^0(\T^d)$ which is mean-free and such that
$\|u_0\|_{A^0}$ is small enough, the Cauchy problem \eqref{eq:parab} is (globally in time) well-posed in the space $X_{0,4}$}.

\medbreak

Results of this kind have already been proved in \tsl{e.g.} \cite{granero2018global, magliocca2024fourth, granero2024global}
for the specific models under consideration, with
however a few important differences, which we now want to comment.
The proof of uniqueness in the space $X_{0,4}$ usually relies on stability estimates in low-regularity norms (for instance, in $A^0$) and requires to bound
the difference of the non-linear terms in those norms. This of course depends on the structure of the non-linear terms: we identify here somehow minimal
assumptions to be imposed on $N$ to carry out such stability estimates.
On the side of existence, instead, the classical proof typically employs a Galerkin approximation procedure, which allows to construct
a sequence of global smooth solutions. After getting uniform bounds in the space $X_{0,4}$, a solution to the specific problem at hand is then constructed
by a weak compactness method, plus, of course, an Ascoli-Arzel\`a or Aubin-Lions argument to get strong convergence and, in this way, pass
to the limit in the non-linear terms.

The subtle (but important) point here is that the compactness argument is typically carried out in classical Sobolev spaces. However,
as is well known, $L^1$ does not possess good properties in terms of weak-$*$ convergence; as a consequence,
the just depicted argument is able to produce solutions belonging only to
$Y_{0,4}\,:=\,L^\infty\big(\!\R_+;L^\infty(\T^d)\big)\,\cap\,\mc M\big(\!\R_+;W^{4,\infty}(\T^d)\big)$, where we have denoted by $\mc M\big(\!\R_+;Z\big)$ the space
of Radon measures over $\R_+$ with values in the Banach space $Z$. Observe that $X_{0,4}\,\subset\,Y_{0,4}$ with \emph{strict} inclusion.
Thus, existence is not obtained in the uniqueness class, which coincides with $X_{0,4}$,
but in the larger space $Y_{0,4}$.

Apart from proposing a unified approach for most of the equations previously considered, see \tsl{e.g.}
\cite{granero2018global, magliocca2024fourth, granero2024global}, the present work complements the aforementioned studies by upgrading
the regularity of the constructed solutions, which are then proved to belong to the uniqueness class.

\medbreak
The paper is organised in the following way. In Section \ref{s:results}, we present the precise assumptions on the non-linearity $N$ and state
our main results, about well-posedness and stability of solutions to the Cauchy problem \eqref{eq:parab}. Their proof is carried out
in Section \ref{s:proof-unique} (proof of the stability result, which in particular implies uniqueness) and Section \ref{s:proof-exist} (proof
of existence).
Section \ref{s:general} presents generalisations of our approach, mainly in three directions: firstly, we prove global propagation of higher regularities in case
of smoother initial data; moreover, we show gain of analyticity of solutions in finite time; finally, we discuss the adaptations to be carried out in our study
in order to treat the case of higher-order equations.
Finally, in Section \ref{s:applic} we show how the equations considered in previous studies
enter our framework, thus implying that our results apply in those cases.

\section{Main results} \label{s:results}

In this section, we collect our main assumptions, specifically related to the non-linearity $N$ appearing on the right-hand side of equation \eqref{eq:parab},
and we formulate our main results.

\subsection{Assumptions} \label{ss:assumpt}
Let us collect the structural assumptions on the non-linear term
\[
 N\,=\,N(t,x,p_0,p_1,p_2,p_3,p_4)
\]
that will be needed in our study. Here below, we denote each one of these assumptions with the label \tbf{(An)}, n$\in\NN$.

Recall that we consider $(t,x)\,\in\,\R_+\!\times\,\T^d$, with $d\geq1$.
Here above, the variable $p_j$ plays the role of the term $D^ju$, for $j=0,1,\ldots, 4$; in particular, these are vector-valued variables whenever $j\geq1$,
precisely we have $p_j\in \R^{d_j}$, with $d_j:=d^j$.
When convenient, we are going to use the shortcut $\mf p := (p_0,p_1,\ldots ,p_4)$ and write $N\,=\,N(t,x,\mf p)$; observe that $\mf p\in \R^{\mf D}$, where we have
set ${\mf D}\,:=\,1+\sum_jd_j$.

We are now ready to formulate our main hypotheses on $N$. We postpone any remark at the end of this part, after stating them all.

\medbreak
\tbf{(A1)} 
The map $(t,x,\mf p)\,\mapsto\,N(t,x,\mf p)$ is continuous and satisfies the following domination condition:
there exist some exponents $\ell_1,\ell_2,\ell_3\in\NN$ and $\ell_4>0$ such that
\begin{align}
\label{eq:disN}
\forall\,(t,x,\mf p)\in \R_+\!\times\,\T^d\times\R^{\mf D}\,,
\qquad\qquad \left|N(t,x,\mf p)\right|\,\leq\,\s_0(t,x)
\sum_{j=1}^{4}\av{p_0}^{\ell_j}\,\av{p_j}^{4/j},
\end{align}
where the function $\s_0(t,x)$ satisfies
\[
\s_0\,\in\,L^\infty\big(\!\R_+;A^0(\T^d)\big)\,.
\]

\medbreak
\tbf{(A2)} The function $N$ is \emph{locally Lipschitz continuous} with respect to the $\mf p$ variable; in addition, there exist
exponents  $m_0,m_1,m_2,m_3\in\NN$ and $m_4>0$ such that
\begin{equation} \label{hyp:Lip-N}
\begin{aligned}
&\forall\,(t,x,\mf p)\in \R_+\!\times\,\T^d\times\R^{\mf D}\,, \\
&\quad
\left|\nabla_{p_j}N(t,x,\mf p)\right|\,\leq\,\s^{(j)}_1(t,x)\,\left|p_0\right|^{m_j}\,
\left(1\,+\,\sum_{k=1}^4\left|p_k\right|^{(4-j)/k}\right) \quad \mbox{ for }\q 0\le j\le 4\,,
\end{aligned}
\end{equation}
for some functions $\s_1^{(j)}(t,x)$ satisfying
\[
\forall\,j\in\big\{0,1,\ldots, 4\big\}\,,\qquad\qquad \s^{(j)}_1\,\in\,L^\infty\big(\!\R_+;A^0(\T^d)\big)\,.
\]

\medbreak
\tbf{(A3)} There exists a function $H\,=\, H(t,x,p_0,p_1,p_2,p_3)$ such that, for any $(t,x)\in\R_+\!\times\,\T^d$ and any
$u\in L^\infty\big(\!\R_+\!\times\,\T^d\big)\cap L^1\big(\!\R_+;W^{4,\infty}(\T^d)\big)$, one has
\[
 N(t,x,u,Du,\ldots,D^4u)\,=\,\div H(t,x,u,Du,D^2u,D^3u)\,,
\]
where the divergence operator acts only with respect to the space variable $x\in\T^d$.

\medbreak
Let us formulate some remarks about the previous assumptions. We start by commenting on hypotheses \tbf{(A1)} and \tbf{(A2)}.

\begin{remark}\label{rmk:7}
In \eqref{eq:disN} and \eqref{hyp:Lip-N},
we can replace the powers $\av{p_0}^{\ell_j} $ and $\av{p_0}^{m_j}$ (where $0\le j\le4$) respectively by
\[
\pare{1\,+\,\av{p_0}^{\ell_j}}\q \t{and}\q \pare{1\,+\,\av{p_0}^{m_j}},
\]
provided we further assume that, for $0\le j\le4$, one has
\[
\norm{\si_0}_{L^\infty(A^0)}\q \mbox{ and }\q \norm{\si_1^{j}}_{L^\infty(A^0)} \qquad \mbox{ suitably small. }
\]

As a matter of fact, it is not hard to check that, by our method, we can replace \eqref{eq:disN} by the more general assumption
\[
\left|N(t,x,\mf p)\right|\,\leq\,\s_0(t,x)\,
\left(
\pare{1+\av{p_0}^{\ell}}\,\sum_{j=1}^{3}\av{p_j}^{4/j}\,+\,\av{p_0}^{\ell_4}\,\av{p_4}\,
\right),
\]
for some $\ell\in\NN$ and $\ell_4>0$, and a similar modification of \eqref{hyp:Lip-N} for the derivatives of $N$ with respect to the variables $p_j$.
This will be important in view of the application of our result to the study carried out in \citep{granero2024global}.
\end{remark}

\begin{remark}\label{rmk:p4}
Estimate \eqref{eq:disN} is somehow ``linear'' with respect to the $p_4$-variable, which is only multiplied by a suitable power of $p_0$,  but not by
the other variables $p_1$, $p_2$ and $p_3$. For consistency, this dependence reflects also in the formulation of assumption \eqref{hyp:Lip-N} and the dependence
of those estimates with respect to the $p_4$-variable.
Such special behaviour of \eqref{eq:disN} and \eqref{hyp:Lip-N} with respect to $p_4$ is absolutely crucial for our method to work.
This is of course linked with the fact that we only deal here with quasi-linear problems.

In this respect we observe that, in particular, in the expression of $N$ we can allow
the presence of terms of the form
\[
\left|p_0\right|^{\ell_j}\,\left|p_j\right|^{a_j}\,\left|p_4\right|^{b_j}\qquad\qquad \mbox{ for }\quad j\,\in\,\big\{1,2,3\big\}\,,
\]
for suitable exponents $\ell_j\in\NN$, $b_j\in[0,1)$ and $a_j\geq0$, under the condition that $a_j/(1-b_j)\leq 4/j$.
Indeed, under these assumptions,
it is easy to see that terms of that kind can be controlled by the right-hand side of \eqref{eq:disN}.
Further generalisations, with fourth-order (or higher) products of $p_j$'s are also possible, under suitable restrictions on the powers of the various terms.
\end{remark}

We now comment on assumption \tbf{(A3)}.
As already pointed out in the introduction, one also needs to control the low-frequency term $\what{u}(t,0)$ of the solution in the study
of well-posedness of equation \eqref{eq:parab}. Unfortunately, despite the spatial domain is of finite measure
and the initial datum is small, we cannot rely on the embedding $A^0\hookrightarrow L^\infty$ to control the evolution of the mean value
of the solution, as such embedding holds true only in the space of non-homogeneous distributions (in other words, we need to know \tsl{a priori} that
the mean value is zero in order to use that embedding). In the end, the problem of closing the estimates boils down to controlling 
the spatial average $\lan N(t,x,Du,\ldots,D^4u)\ran$ of the non-linearity computed on the solution itself.
Of course, this information cannot be guaranteed at a full level of generality without any further assumption.
It is precisely here that hypothesis \tbf{(A3)} plays a role: it ensures the persistence of the average of the initial datum, thus solving the issue
caused by the low-frequency control.

We stress the fact that the only aim of hypothesis \tbf{(A3)} is to bound the low-frequency part of the solution. In particular,
it will not be needed for the high-regularity estimates in the spaces $A^\g$. In fact, we remark that, in order to carry out those estimates,
it is not useful to use the divergence form of $N$ and integrate by parts, for instance.

\medbreak

Finally, we conclude this part by making an explicit example of non-linearity $N$ which satisfies our assumptions {\rm \tbf{(A1)}} and {\rm \tbf{(A2)}}.

\begin{example}
Let us take the function $N$ independent of $(t,x)$ and defined by the formula below:
\[
N(\mf p)\,=\,\sum_{\substack{ 0\le i,j,s,q\le4\\ i+j+s+q=4}} p_0^{\ell_{i,j,s,q}}\, p_i\,p_j\,p_s\,p_q\,, 
\]
under the assumption that the various terms $p_i\,p_j\,p_s\, p_q$ of the sum are special combinations of components of each $p_n\in \R^{d_n}$
giving scalar terms. Then, we claim that such $N$ satisfies both assumptions {\rm \tbf{(A1)}} and {\rm \tbf{(A2)}}.

As a matter of fact, let us just focus on showing {\rm \tbf{(A1)}}, the proof of {\rm \tbf{(A2)}} being analogous.
We can estimate
\begin{align*}
\av{N(\mf p)}\,&\lesssim\,\sum_{\substack{ 0\le i,j,s,q\le4\\ i+j+s+q=4}} \av{p_0}^{\ell_{i,j,s,q}}\, \av{p_i}\, \av{p_j} \,\av{p_s}\, \av{p_q}\\
&\lesssim\,\av{p_0}^{\ell_0}\,\av{p_4}\,+\, \av{p_0}^{\ell_1}\,\av{p_1}\,\av{p_3}\,+\,
\av{p_0}^{\ell_2}\,\av{p_1}^2\,\av{p_2}\,+\, \av{p_0}^{\ell_3}\,\av{p_1}^4\,+\, \av{p_0}^{\ell_4}\,\av{p_2}^2\,,
\end{align*}
which is easily reconducted to the general form \eqref{eq:disN} after an application of the Young inequality.
\end{example}

\subsection{Statement of the main results} \label{ss:statem}
Let us finally present the main results of this work, related to the well-posedness of the Cauchy problem \eqref{eq:parab}.

First of all, let us introduce the following convenient notation: given a function $f=f(t,x)$ defined on $\R_+\!\times\,\T^d$, we define the quantity
$\lan f(t)\ran$ to be its spatial average, namely
\[
 \lan f(t)\ran  \,:=\,\frac{1}{|\T^d|}\,\int_{\T^d}f(t,x)\,dx\,.
\]
Then we notice that, under assumption \tbf{(A3)}, the mean value of the initial datum is preserved: for any initial datum
$u_0\in L^\infty(\T^d)$ and any (say) local-in-time solution $u\in L^\infty\big([0,T]\times\T^d\big)\cap L^1\big([0,T];W^{4,\infty}(\T^d)\big)$
to equation \eqref{eq:parab}, for some $T>0$, one has
\begin{equation} \label{est:average}
\forall\,t\in[0,T]\,,\qquad\qquad \lan u(t)\ran\,=\,\lan u_0\ran\,.
\end{equation}
Therefore, we can safely restrict our attention to the case of mean-free initial data: from now on, we will always assume that
\[
 \lan u_0\ran\,=\,0
\]
and we will look for solutions being mean-free as well. In particular, for this class of solutions, one has the continuous embedding
$A^0(\T^d)\hookrightarrow\mc C(\T^d)$.

Next, we observe that,
in order to treat the non-linearity in a rather broad level of generality, it is natural to look for strong solutions.
Inspired by previous studies \cite{granero2018global, magliocca2024fourth,
granero2024global}, we consider here the functional framework of Wiener spaces, whence assumption \eqref{hyp:in-datum} on the initial datum
$u_0$.
Indeed, for such an initial datum $u_0$ as in \eqref{hyp:in-datum}, we hope to get a solution $u$ to \eqref{eq:parab} in the space
$X_{0,4}$ defined in \eqref{def:X_{0,4}}.

Let us remark that, by making use of the interpolation inequality (see \tsl{e.g.} \citep[Lemma 2.1]{bruell2019thin}),
\begin{equation}\label{interpol}
\|u\|_{A^p}\,\le\,\|u\|_{A^0}^{1-\theta}\,\|u\|_{A^q}^{\theta}\,,\qquad\qquad \mbox{ for }\qquad 0\le p\le q\,, \quad \theta\,=\,\frac{p}{q}\,,
\end{equation}
and from the condition $u\in X_{0,4}(T)$, for some $T>0$ (with natural modifications when  $T=+\infty$), one deduces that
\begin{equation}\label{eq:interpol_X0}
 u\,\in\, L^4\big([0,T];A^1(\T^d)\big)\,\cap\,L^2\big([0,T];A^2(\T^d)\big)\,\cap\,L^{4/3}\big([0,T];A^3(\T^d)\big)\,.
\end{equation}
Under assumption \eqref{eq:disN}, this implies that
\[
 N\big(t,x,u(t,x),Du(t,x),\ldots,D^4u(t,x)\big)\,\in\,L^1\big([0,T];A^0(\T^d)\big)
\]
and, by use of the equation, one discovers that also $\d_tu$ belongs to this space.

The consequence of this fact is twofold.
On the one hand, we have that the non-linearity and all the terms in \eqref{eq:parab} make sense pointwise, thus $u$ is a strong solution to that equation,
as claimed. On the other hand, owing to the continuous embedding $A^0(\T^d)\subset \mc C(\T^d)$, which holds true in the case of mean-free functions,
we deduce that
$u\in \mc C\big(\!\R_+\!\times\,\T^d\big)$, so taking its trace at time $t=0$ makes sense and the initial datum is attained in the sense of
distributions (and actually, even in a stronger sense).

For later use, we also point out the weak formulation of equation \eqref{eq:parab}, which is of course satisfied by such a $u$:
for any $\vp\in\mc C^\infty_0\big(\!\R_+\!\times\,\T^d\big)$, with $\supp\vp\subset [0,T]\times\T^d$ for some suitable time $T>0$, one has
the equality
\begin{equation} \label{eq:weak-form}
\iint_{[0,T]\times\,\TT^d}\left(-u\pat\vp\,+\,\al\, u\,\D^2\vp\,-\,N\big(t,x,u,Du,\ldots,D^4 u\big)\,\vp\right)\, dx\, d t\,=\,
\int_{\TT^d} u_0\,\vp(0,\cdot)\, dx\,.
\end{equation}

After these preliminaries, we are now ready to state the main results of this paper.
We start with an existence and uniqueness statement in the class of Wiener spaces.

\begin{theorem}[Well-posedness]\label{teo:wp}
Consider the Cauchy problem \eqref{eq:parab}, under assumptions {\rm \tbf{(A1)}}, {\rm \tbf{(A2)}} and  {\rm \tbf{(A3)}} (stated in Subsection \ref{ss:assumpt} above)
on the non-linearity $N$.

Then, there exists $\veps_0>0$ such that, for any $u_0\in A^0(\T^d)$ satisfying
$\lan u_0\ran=0$ and
$\left\|u_0\right\|_{A^0}\,\leq\,\veps_0$,
there exists a unique solution $u\in X_{0,4}$ to \eqref{eq:parab} related to this initial datum.
In addition, one has $\lan u(t)\ran=0$ for all $t\geq0$, and there exists a constant $C\,=\,C(\alpha,\veps_0)\,>\,0$ only depending on the diffusion coefficient
$\alpha$ and on $\veps_0$, such that
\begin{equation} \label{est:solution}
 \|u\|_{X_{0,4}}\,\leq\,C\,\left\|u_0\right\|_{A^0}\,.
\end{equation}
\end{theorem}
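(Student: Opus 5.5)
The argument has three parts: a priori estimates in $X_{0,4}$ driven by a small-data bootstrap, a stability estimate in $A^0$ (which gives uniqueness), and existence via Galerkin approximation. The key point for existence is to pass to the limit so that the solution lands in $X_{0,4}$ itself, not just in the larger space $Y_{0,4}$.

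\textbf{A priori estimate.} Let $u$ be a smooth mean-free solution. For each $k\neq0$ I would compute $\frac{d}{dt}|\widehat u(k)|$ using $\d_t\widehat u(k)+\alpha|k|^4\widehat u(k)=\widehat{N}(k)$ and sum over $k$. This gives
\[
\|u(t)\|_{A^0}+\alpha\int_0^t\|u\|_{A^4}\,d\tau\,\le\,\|u_0\|_{A^0}+\int_0^t\|N(\tau)\|_{A^0}\,d\tau .
\]
Bounding $\|N\|_{A^0}$ by (A1) requires a pointwise bound in the Wiener algebra, and I would obtain it as follows. $A^0$ with the $\widehat{u}(0)$ term included is a Banach algebra, and $|\widehat{fg}(k)|\le(|\widehat f|*|\widehat g|)(k)$. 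So I would work with the majorant series and treat $N$ through its structure, as in the Example: a sum of terms $\sigma\,p_0^{\ell}\,p_{j_1}\cdots p_{j_r}$ with $\sum j_i=4$. In this form $\|N\|_{A^0}\lesssim \|\sigma_0\|_{L^\infty A^0}\sum_j\|u\|_{A^0}^{\ell_j}\,\|D^ju\|_{A^0}^{4/j}$. I then use $\|D^ju\|_{A^0}\le\|u\|_{A^j}$ together with the interpolation \eqref{interpol}, which gives $\|u\|_{A^j}^{4/j}\le\|u\|_{A^0}^{4/j-1}\|u\|_{A^4}$. The result is
\[
\|N\|_{A^0}\,\le\,C\,\Phi(\|u\|_{A^0})\,\|u\|_{A^4},\qquad \Phi(0)=0 .
\]
Because $\ell_4>0$, the top-order term also carries a small factor. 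A continuity/bootstrap argument then closes the estimate: if $\sup_t\|u\|_{A^0}\le 2\veps_0$ with $C\Phi(2\veps_0)\le\alpha/2$, we obtain $\|u\|_{X_{0,4}}\le 2\|u_0\|_{A^0}$, which is \eqref{est:solution}. Mean preservation comes from (A3) via \eqref{est:average}, and this legitimizes the use of $A^0\hookrightarrow L^\infty$. \emph{Caveat.} (A1) is only a pointwise bound, so translating it into an $A^0$ bound needs some analytic structure. I would handle this either by interpreting (A1) in the Wiener-algebra sense, i.e.\ applying it to the majorant series, or by reducing to polynomial-type $N$. This is the point I expect to be delicate.

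\textbf{Stability and uniqueness.} Take two solutions $u,v\in X_{0,4}$ with small norms and set $w=u-v$. I write
\[
N(u)-N(v)=\sum_{j=0}^4\int_0^1\nabla_{p_j}N(\theta)\,d\theta\cdot D^jw ,
\]
and estimate in $A^0$ exactly as above, using (A2). The coefficient in front of $D^4w$ is bounded by $\sigma_1^{(4)}|p_0|^{m_4}$, which is small because $m_4>0$. The lower-order terms $D^jw$ have coefficients like $|p_k|^{(4-j)/k}$, and I bound them using interpolation of $w$ between $A^0$ and $A^4$ plus Young's inequality. This yields
\[
\|w\|_{L^\infty_tA^0}+\tfrac{\alpha}{2}\|w\|_{L^1_tA^4}\le\|w_0\|_{A^0}+\int_0^t f(\tau)\|w(\tau)\|_{A^0}\,d\tau,
\]
with $f\in L^1$ built from $\|u\|_{A^4}+\|v\|_{A^4}$. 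Gronwall's lemma then gives stability, and taking $w_0=0$ gives uniqueness.

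\textbf{Existence in $X_{0,4}$.} I would use a Galerkin truncation $u_n=P_nu_n$ with projected nonlinearity $P_nN$. Since $P_n$ is a contraction on $A^0$, the a priori bounds above hold uniformly in $n$. Weak-$*$ limits alone only produce a solution in $Y_{0,4}$. To get into $X_{0,4}$, I would instead show that $(u_n)$ is Cauchy in $X_{0,4}$ by rerunning the stability estimate on $u_n-u_m$. The extra source terms $(P_n-P_m)N(u_m)$ and $(P_n-P_m)u_0$ tend to zero in $L^1_tA^0$ by dominated convergence on the Fourier side. Dominated convergence applies because $\|N(u_m)\|_{L^1A^0}$ is uniformly bounded, and I also need its tail to be uniformly small. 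Strong convergence in the Banach space $X_{0,4}$ then gives the solution directly, and it lies in the uniqueness class. \textbf{Main obstacle.} Proving this uniform tail smallness is the difficulty I expect. I would try to get it by propagating a uniform bound on $\sum_k|k|^{\epsilon}|\widehat u_n(k)|$ for smoother approximate data, together with the stability estimate to handle the approximation of $u_0$.
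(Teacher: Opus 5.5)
The a priori bootstrap and the stability/uniqueness argument in your proposal follow the paper (Sections~\ref{s:proof-unique} and~\ref{s:proof-exist}). Your caveat about turning the pointwise bounds (A1)--(A2) into $A^0$ bounds applies equally to the paper, which takes that step tacitly. Two minor points:
\begin{itemize}
\item With your full-segment Taylor formula, the coefficient of $D^4w$ is small only if \emph{both} solutions are small in $L^\infty_tA^0$. For a competitor with the same datum this follows from your bootstrap and the continuity of $t\mapsto\|u(t)\|_{A^0}$, but you should say so.
\item Because of the constant ``$1+$'' in (A2), your Gronwall weight $f$ is only in $L^1_{\mathrm{loc}}$ (cf.\ Remark~\ref{r:Gronw}). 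So you get Cauchy sequences in $X_{0,4}(T)$ for each $T$, not in $X_{0,4}$, and global membership must come from the uniform bound, as in the paper.
\end{itemize}

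The genuine gap is the one you flag yourself, and it is real. The difference $u_n-u_m$ solves
\[
\partial_t(u_n-u_m)+\alpha\Delta^2(u_n-u_m)=\mathbb{P}_n\big[N(u_n)-N(u_m)\big]+(\mathbb{P}_n-\mathbb{P}_m)N(u_m),
\]
so Theorem~\ref{t:stab} does not apply to the pair $(u_n,u_m)$. Subsection~\ref{ss:converg} invokes it anyway and never estimates the last term. A uniform bound on $N(u_m)$ in $L^1_TA^0$ does not make that term vanish as $n,m\to\infty$; you need uniform smallness of its Fourier tails. Your proposal leaves exactly this open, so existence in $X_{0,4}$ is not established.

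Your sketched remedy can be organised soundly. Let $u^K_n$ be the Galerkin solution with datum $\mathbb{P}_Ku_0$, and compare $u_n$, $u^K_n$, $u^K_m$, $u_m$, applying the stability estimate to the \emph{same} projected equation on the outer links. The middle link, however, requires $N(u^K_m)$ to be bounded in $L^1_TA^\epsilon$ uniformly in $m$. Assumptions (A1)--(A2) as stated, with weights $\sigma_0,\sigma^{(j)}_1$ only in $L^\infty_tA^0$, give no control of $\|N(u)\|_{A^\epsilon}$. That step therefore needs extra structure on $N$ and a proof.

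A cleaner way out is to drop the Galerkin scheme altogether:
\begin{itemize}
\item On the Fourier side, Duhamel's formula gives $\|w\|_{L^\infty_TA^0}+\alpha\|w\|_{L^1_TA^4}\le 2\big(\|w(0)\|_{A^0}+\|f\|_{L^1_TA^0}\big)$ for $\partial_tw+\alpha\Delta^2w=f$.
\item Estimate the (A2) difference bound by H\"older in time rather than Gronwall. The weights in front of $\|w\|_{A^j}$, $j\le3$, have small $L^{4/(4-j)}(0,T)$ norms when $T$ is small.
\item Together these make $v\mapsto e^{-\alpha t\Delta^2}u_0+\int_0^te^{-\alpha(t-s)\Delta^2}N(v)\,ds$ a contraction on a small ball of $X_{0,4}(T)$ centred at the free evolution, for $T$ small.
\item Your a priori bound then yields $\|\partial_tu\|_{A^0}\lesssim\|u\|_{A^4}\in L^1$ up to any putative maximal time. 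So $u(t)$ converges in $A^0$ there and the solution can be restarted.
\end{itemize}
The solution is then global and lies in $X_{0,4}$, and no projection error ever appears.
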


In order to establish the previous theorem (for both the existence and the uniqueness parts), a key point is the use of the following stability
result.

\begin{theorem}[Stability] \label{t:stab}
Consider the Cauchy problem \eqref{eq:parab}, under assumptions {\rm \tbf{(A1)}}, {\rm \tbf{(A2)}} and {\rm \tbf{(A3)}}
(stated in Subsection \ref{ss:assumpt} above) on the non-linearity $N$. Let $u_0$ and $v_0$ be two initial data in $A^0(\T^d)$, and let respectively $u$ and $v$ be two
corresponding solutions belonging to the space $X_{0,4}(T)$, for some time $T>0$.
Suppose that
\begin{equation}\label{eq:t0-stab}
\sup_{t\in[0,T]}\norm{v(t)}_{A^0}\,\leq\,K\,\norm{v_0}_{A^0}\,,
\end{equation}
for a suitable multiplicative constant $K>0$, possibly depending on the time $T$.\\

Then, there exists $\veps_1>0$ such that, if $\norm{v_0}_{A^0}\,\leq\,\veps_1$, one has
\begin{align}
\sup_{t\in[0,T]}\norm{u(t)\,-\,v(t)}_{A^0}\,+\,\int_{0}^{T}\norm{u(\tau)\,-\,v(\tau)}_{A^4}\, d \tau
\,&\lesssim\,\norm{u_0\,-\,v_0}_{A^0}\,, \label{eq:dis-stab0}
\end{align}
where the implicit multiplicative constant may depend on $\alpha$, $\veps_1$, $K$, on the time $T$ and also on the norms of $u$ and $v$ in the space $X_{0,4}(T)$.
\end{theorem}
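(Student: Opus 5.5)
We set $w:=u-v$, which solves
\[
\d_t w\,+\,\alpha\,\Delta^2 w\,=\,N(t,x,u,\ldots,D^4u)\,-\,N(t,x,v,\ldots,D^4v)\,=:\,\delta N,\qquad w|_{t=0}=u_0-v_0 .
\]
We first handle the mean. By \textbf{(A3)} and \eqref{est:average}, $\lan w(t)\ran=\lan u_0-v_0\ran$ is constant, and $|\lan u_0-v_0\ran|\le\|u_0-v_0\|_{L^1}$; we will also need $\lan u\ran,\lan v\ran$ to control $\|u\|_{L^\infty}$, $\|v\|_{L^\infty}$ through $|\lan\cdot\ran|+\|\cdot\|_{A^0}$. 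This low-frequency part is harmless and we focus on the nonzero modes.

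The core is an $A^0$ energy estimate in Fourier. For $k\neq0$, $\d_t\what w(k)+\alpha|k|^4\what w(k)=\what{\delta N}(k)$. Multiplying by $\overline{\what w(k)}/|\what w(k)|$ and summing over $k$ gives
\[
\|w(t)\|_{A^0}\,+\,\alpha\int_0^t\|w\|_{A^4}\,d\tau\,\le\,\|w_0\|_{A^0}\,+\,\int_0^t\|\delta N\|_{A^0}\,d\tau .
\]
To estimate $\delta N$ we write $\delta N=\int_0^1\sum_{j=0}^4\nabla_{p_j}N(t,x,\mf p_\theta)\cdot D^jw\,d\theta$, where $\mf p_\theta=\theta\,\mf p(u)+(1-\theta)\,\mf p(v)$. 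Since $A^0$ is an algebra containing $L^\infty$-bounded Fourier products, the bound \eqref{hyp:Lip-N} needs to be transferred to the $A^0$ norm of the composite coefficient. Here we will assume, or argue via a Taylor/power-series structure, that $\|\nabla_{p_j}N(\mf p_\theta)\|_{A^0}$ is controlled by $\|\s_1^{(j)}\|_{A^0}\,\|p_0\|_{A^0}^{m_j}\bigl(1+\sum_k\|D^k\cdot\|_{A^0}^{(4-j)/k}\bigr)$. This is the delicate point, since pointwise bounds do not directly give Wiener bounds, and we expect to treat it exactly as the paper handles \textbf{(A1)} for $N$ itself. With this in hand, we get the following.

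For $j=4$ the term is bounded by $C\,\|\s_1^{(4)}\|\,(\|u\|_{A^0}+\|v\|_{A^0})^{m_4}\,(1+\ldots)\,\|w\|_{A^4}$. Here the structure described in Remark \ref{rmk:p4} is essential: the coefficient multiplying $p_4$ carries no derivatives beyond what is controlled in $L^\infty_t$ only when the $p_k$ powers are absent. We should double-check how the $(1+\sum|p_k|^{0})$ factor behaves; for $j=4$ it is a constant, so this term is bounded by $C(\|u\|_{L^\infty A^0}+\|v\|_{L^\infty A^0})^{m_4}\|w\|_{A^4}$. Smallness of $v$ comes from \eqref{eq:t0-stab} with $\|v_0\|_{A^0}\le\veps_1$. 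For $u$, however, only finiteness is known, so this term would need to be absorbed. This is the main obstacle. Our first attempt is to split $\mf p_\theta$ and rewrite $\delta N$ so that the top-order coefficient is evaluated at $v$, i.e.\ $N(\mf p(u))-N(\mf p(v))$ with the $p_4$-difference carrying coefficients computed on $v$, while the remaining error involves $D^4u$ times lower-order differences of $w$. Then $\|\nabla_{p_4}N(\mf p(v))\|_{A^0}\lesssim\veps_1^{m_4}$ can be absorbed into $\alpha\|w\|_{A^4}$ by choosing $\veps_1$ small.

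For $j\le3$ we use interpolation \eqref{interpol}, Young, and \eqref{eq:interpol_X0}. Terms such as $\|D^ku\|_{A^0}^{(4-j)/k}\|w\|_{A^j}$ are bounded by $\eta\|w\|_{A^4}+C_\eta\,\|u\|_{A^k}^{4/k}\|w\|_{A^0}$, because $\|w\|_{A^j}\le\|w\|_{A^0}^{1-j/4}\|w\|_{A^4}^{j/4}$ and the exponents match via Hölder with $\tfrac{4-j}{4}+\tfrac j4=1$. The coefficients $\|u\|_{A^k}^{4/k}$ and $\|u\|_{A^4}$ belong to $L^1_t$. The error terms from the previous step, of the form $D^4u\cdot D^jw$ for $j\le3$, are treated in the same way, using $\|u\|_{A^4}\in L^1_t$. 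Absorbing $\eta\|w\|_{A^4}$ yields
\[
\|w(t)\|_{A^0}+\tfrac{\alpha}{2}\int_0^t\|w\|_{A^4}\le\|w_0\|_{A^0}+\int_0^t g(\tau)\,\|w(\tau)\|_{A^0}\,d\tau,\qquad g\in L^1(0,T),
\]
where $g$ depends on the $X_{0,4}(T)$ norms of $u$ and $v$ together with the mean contributions. Grönwall's inequality then gives \eqref{eq:dis-stab0}.
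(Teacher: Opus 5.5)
Your argument is essentially the paper's proof. Both use an $A^0$ energy estimate for $u-v$ and a Taylor expansion arranged so that the coefficient of $D^4(u-v)$ is evaluated with $p_0=v$, hence small by \eqref{eq:t0-stab} and $m_4>0$; the paper changes $p_0,\dots,p_4$ one at a time, while you separate the $p_4$-increment, taken at $(v,Dv,D^2v,D^3v)$, from a joint increment in $p_0,\dots,p_3$ with $p_4=D^4u$. Both then apply interpolation and Young to the $j\le3$ terms and conclude with Gr\"onwall, and, like the paper, you take for granted the passage from the pointwise bound \textbf{(A2)} to $A^0$ bounds on the composite coefficients.

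You do better than the paper on the $j\le 3$ terms: you absorb them with a free Young parameter $\eta$, whereas the paper puts $\|v_0\|_{A^0}^{m_j}$ into $\omega_0$ as the small factor. That factor equals $1$ when $m_j=0$, which is the case in every application of Section~\ref{s:applic}, so your version is the one that closes in general.
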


\section{Stability and uniqueness} \label{s:proof-unique}

This section is devoted to the proof of Theorem \ref{t:stab}. Notice that this stability result, combined with estimate \eqref{est:solution}
and with the preservation of the spatial average of the solution (see also relation \eqref{est:average-diff} below),
immediately implies the uniqueness statement of Theorem \ref{teo:wp}.

\begin{proof}[Proof of Theorem \ref{t:stab}]
Without lack of any generality, we detail the proof in the case of space dimension $d=1$, to simplify the notation.

We consider problem \eqref{eq:parab} with initial data $u\bigl|_{t=0}\,=u_0$ and $v\bigl|_{t=0}\,=v_0$.
Let us denote by $\de:=u-v$ the difference of the respective solutions, and observe that $\de$ solves the equation
\begin{align*}
\ds\pat \de\,+\,\al \D^2\de\,=\,N(t,x,u,\ldots,D^4 u)\,-\,N(t,x,v,\ldots,D^4 v)
\end{align*}
related to the initial datum $\de\bigl|_{t=0}\,=u_0-v_0$. Owing to assumption \tbf{(A3)}, we notice that
\begin{equation} \label{est:average-diff}
\forall\,t\in[0,T]\,,\qquad\qquad \lan \de(t)\ran\,=\,\lan u_0\,-\,v_0\ran\,.
\end{equation}
In particular, this average is equal to $0$ in case the initial data have the same average over $\T^d$.

Next, we use that
\begin{equation}\label{eq:pat}
\pat|\w{f}(t,k)|={Re\left( \overline{\w{f}}(t,k)\pat \w{f}(t,k) \right)}/{|\w{f}(t,k)|}
\end{equation}
to estimate the $A^0$-semi-norm of the difference $\de$: we find
\begin{align}\label{eq:diff}
\frac{d}{dt}\norm{\de(t)}_{A^0}\,+\,\al\norm{\de(t)}_{A^4}\,&=\,
\norm{N(t,x,u,\ldots,D^4 u)\,-\,N(t,x,v,\ldots,D^4 v)}_{A^0}\,.
\end{align}
Thus, in order to complete the proof, we need to estimate the right-hand side of \eqref{eq:diff}.

To begin with, we notice that, by Taylor's formula, we can write
\begin{align*}
&N(t,x,u,\ldots,D^4 u)-N(t,x,v,\ldots,D^4 v)\\
&\qq =\,
 \pa_{p_0}N(t,x,\xi_0,Du,D^2u,D^3u,D^4 u)\,\de\,+\,\pa_{p_1}N(t,x,v,\xi_1,D^2u,D^3u,D^4 u)\,D\de \\
&\qq\qq\q +\,\pa_{p_2}N(t,x,v,Dv,\xi_2,D^3u,D^4 u)\,D^2\de\,+\,\pa_{p_3}N(t,x,v,Dv,D^2v,\xi_3,D^4 u)\,D^3\de \\
&\qq\qq\qq\qq\qq\qq\qq\qq\qq\qq\qq\qq +\,\pa_{p_4}N(t,x,v,Dv,D^2v,D^3v,\xi_4)\,D^4\de\,,
\end{align*}
for suitable vectors $\xi_j=\xi_j(t,x)$ belonging to the segment joining $D^ju(t,x)$ and $D^jv(t,x)$.
Note that, for any $0\leq j\leq 4$, one has
\begin{equation} \label{est:xi_j}
 \left\|\xi_j(t)\right\|_{A^0}\,\leq\,\left\|D^ju(t)\right\|_{A^0}\,+\,\left\|D^jv(t)\right\|_{A^0}\,.
\end{equation}

Now, the $A^0$-semi-norm of this difference can be estimated as
\begin{align}
&\norm{N(t,x,u,\ldots,D^4 u)-N(t,x,v,\ldots,D^4 v)}_{A^0}\nonumber\\
&\le\,\norm{\pa_{p_0}N(t,x,\xi_0,Du,D^2u,D^3u,D^4 u)}_{A^0}\,\norm{\de(t)}_{A^0}\nonumber\\
&\qq+\,\norm{\pa_{p_1}N(t,x,v,\xi_1,D^2u,D^3u,D^4 u)}_{A^0}\,\norm{\de(t)}_{A^1}\nonumber\\
&\qq\qq+\,\norm{\pa_{p_2}N(t,x,v,Dv,\xi_2,D^3u,D^4 u)}_{A^0}\,\norm{\de(t)}_{A^2}\nonumber\\
&\qq\qq\qq+\norm{\pa_{p_3}N(t,x,v,Dv,D^2v,\xi_3,D^4 u)}_{A^0}\,\norm{\de(t)}_{A^3}\nonumber\\
&\qq\qq\qq\qq+\,\norm{\pa_{p_4}N(t,x,v,Dv,D^2v,D^3v,\xi_4)}_{A^0}\,\norm{\de(t)}_{A^4}\,.\label{eq:stima-paN}
\end{align}
At this point, we need to estimate each factor multiplying the norms of $\de$ in the above expression.
Making use of assumption \tbf{(A2)}, we can bound
\begin{align*}
&\norm{\pa_{p_0}N(t,x,\xi_0,Du,D^2u,D^3u,D^4 u)}_{A^0}\,\norm{\de(t)}_{A^0} \\
&\lesssim\,\norm{\si_1^{(0)}}_{L^\infty(A^0)}\,\norm{\xi_0(t)}_{A^0}^{m_0}\,\pare{1+\norm{u(t)}_{A^1}^4+\norm{u(t)}_{A^2}^2+\norm{u(t)}_{A^3}^{4/3}+\norm{u(t)}_{A^4}}\,
\norm{\de(t)}_{A^0}\,.
\end{align*}
Observe that, thanks to \eqref{eq:interpol_X0} and \eqref{est:xi_j}, the factor in front of $\norm{\de(t)}_{A^0}$
on the right-hand side of the previous relation belongs to $L^1([0,T])$.

Next, we deal with the term containing $\pa_{p_1}N$ in \eqref{eq:stima-paN}.
Applying the interpolation inequality \eqref{interpol} and the Young inequality, and making use of the boundedness property \eqref{eq:t0-stab}, we estimate it as follows:
\begin{align*}
&\norm{\pa_{p_1}N(t,x,v,\xi_1,D^2u,D^3u,D^4 u)}_{A^0}\,\norm{\de(t)}_{A^1} \\
&\lesssim\,\norm{\si_1^{(1)}}_{L^\infty(A^0)}\,\norm{v(t)}_{A^0}^{m_1}\,\pare{1+\norm{\xi_1(t)}_{A^1}^3+\norm{u(t)}_{A^2}^{3/2}+\norm{u(t)}_{A^3}+\norm{u(t)}_{A^4}^{3/4}}\,
\norm{\de(t)}_{A^0}^{3/4}\,\norm{\de(t)}_{A^4}^{1/4}
\\
&\lesssim\,\norm{v_0}_{A^0}^{4m_1}\,\norm{\de(t)}_{A^4} \\
&\qq\qq +\,\norm{\si_1^{(1)}}_{L^\infty(A^0)}^{4/3}\,\pare{1+\norm{\xi_1(t)}_{A^1}^3+\norm{u(t)}_{A^2}^{3/2}+\norm{u(t)}_{A^3}+\norm{u(t)}_{A^4}^{3/4}}^{4/3}\,
\norm{\de(t)}_{A^0} \\
&\lesssim\,\norm{v_0}_{A^0}^{4m_1}\,\norm{\de(t)}_{A^4} \\
&\qq\qq +\,\norm{\si_1^{(1)}}_{L^\infty(A^0)}^{4/3}\,\pare{1+\norm{\xi_1(t)}_{A^1}^4+\norm{u(t)}_{A^2}^{2}+\norm{u(t)}_{A^3}^{4/3}+\norm{u(t)}_{A^4}}\,\norm{\de(t)}_{A^0}\,.
\end{align*}
Observe that, owing to the smallness assumption on the initial datum $v_0$, one has that the factor multiplying
$\norm{\de(t)}_{A^4}$ is small, whereas the one in front of the term $\norm{\de(t)}_{A^0}$ belongs to $L^1([0,T])$, thanks to \eqref{est:xi_j}.

Reasoning in a similar way, we easily see that we can estimate the remaining terms as follows:
\begin{align*}
&\norm{\pa_{p_2}N(t,x,v,Dv,\xi_2,D^3u,D^4 u)}_{A^0}\,\norm{\de(t)}_{A^2}\\
&\qq\lesssim\, \norm{v_0}_{A^0}^{2m_2}\,\norm{\de(t)}_{A^4} \\
&\qq\qq+\,\norm{\si_1^{(2)}}_{L^\infty(A^0)}^2\,\pare{1+\norm{v(t)}_{A^1}^4+\norm{\xi_2(t)}_{A^2}^2+\norm{u(t)}_{A^3}^{4/3}+\norm{u(t)}_{A^4}}\,\norm{\de(t)}_{A^0}\,,\\
&\norm{\pa_{p_3}N(t,x,v,Dv,D^2v,\xi_3,D^4 u)}_{A^0}\,\norm{\de(t)}_{A^3} \\
&\qq\lesssim\,  \norm{v_0}_{A^0}^{4m_3/3}\,\norm{\de(t)}_{A^4} \\
&\qq\qq+\, \norm{\si_1^{(3)}}_{L^\infty(A^0)}^4\,\pare{1+\norm{v(t)}_{A^1}^4+\norm{v(t)}_{A^2}^2+\norm{\xi_3(t)}_{A^3}^{4/3}+\norm{u(t)}_{A^4}}\,\norm{\de(t)}_{A^0}\,,\\
&\norm{\pa_{p_4}N(t,x,v,Dv,D^2v,D^3v,\xi_4)}_{A^0}\,\norm{\de(t)}_{A^4} \\
&\qq\lesssim\,  \norm{\si_1^{(4)}}_{L^\infty(A^0)}\,\norm{v_0}_{A^0}^{m_4}\,\norm{\de(t)}_{A^4}\,.
\end{align*}

Putting all these estimates into \eqref{eq:stima-paN}, we can bound
\begin{align}\label{eq:nunv}
%
\norm{N(t,x,u,\ldots,D^4 u)-N(t,x,v,\ldots,D^4 v)}_{A^0}\,
&\lesssim\,\omega_0\,\norm{\de(t)}_{A^4}\,+\,g(t)\,\norm{\de(t)}_{A^0}\,,
\end{align}
where we have set
\begin{equation}\label{eq:om0}
\omega_0\,:=\,\max\set{ \norm{v_0}_{A^0}^{4m_1}\,,\, \norm{v_0}_{A^0}^{2m_2}\,,\, \norm{v_0}_{A^0}^{4m_3/3}\,,\,\norm{\si_1^{(4)}}_{L^\infty(A^0)}\norm{v_0}_{A^0}^{m_4}},
\end{equation}
and where $g\in L^1\big([0,T]\big)$ is defined by
\[
\begin{split}
g(t)\,&:=\,\norm{\si_1^{(0)}}_{L^\infty(A^0)}\norm{\big(u(t),v(t)\big)}_{A^0}^{m_0}\pare{1+\norm{u(t)}_{A^1}^4+\norm{u(t)}_{A^2}^2+\norm{u(t)}_{A^3}^{4/3}+\norm{u(t)}_{A^4}} \\ 
&\qq+\norm{\si_1^{(1)}}_{L^\infty(A^0)}^{4/3}\pare{1+\norm{\big(u(t),v(t)\big)}_{A^1}^4+\norm{u(t)}_{A^2}^{2}+\norm{u(t)}_{A^3}^{4/3}+\norm{u(t)}_{A^4}} \\
&\qq+\norm{\si_1^{(2)}}_{L^\infty(A^0)}^2\pare{1+\norm{v(t)}_{A^1}^4+\norm{\big(u(t),v(t)\big)}_{A^2}^2+\norm{u(t)}_{A^3}^{4/3}+\norm{u(t)}_{A^4}} \\
&\qq+ \norm{\si_1^{(3)}}_{L^\infty(A^0)}^4\pare{1+\norm{v(t)}_{A^1}^4+\norm{v(t)}_{A^2}^2+\norm{\big(u(t),v(t)\big)}_{A^3}^{4/3}+\norm{u(t)}_{A^4}}\,,
\end{split}
\]
where we have adopted the convenient notation $\|(a,b)\|_{A^k}:=\|a\|_{A^k}+\|b\|_{A^k}$ for any $k\in\{0,1,2,3\}$.
Plugging \eqref{eq:nunv} into \eqref{eq:diff} then yields, for a universal multiplicative constant $C>0$, the inequality 
\begin{align*}
\frac{d}{dt}\norm{\de(t)}_{A^0}\,+\,\al\norm{\de(t)}_{A^4}\,&\leq\,
C\,\omega_0\,\norm{\de(t)}_{A^4}\,+\,C\,g(t)\,\norm{\de(t)}_{A^0}\,.
\end{align*}
By assumption, $\norm{v_0}_{A^0}\,\leq\,\veps_1$.
Hence, owing to the definition of $\omega_0$ given in \eqref{eq:om0}, if $\veps_1>0$ is small enough,
we can ensure the inequality $C\,\omega_0\,\leq\,\alpha/2$. Using this information in the previous estimate implies that
\begin{align} \label{est:pre-Gronw}
&\frac{d}{dt}\norm{\de(t)}_{A^0}\,+\,\frac{\al}{2}\,\norm{\de(t)}_{A^4}\,\lesssim\, g(t)\,\norm{\de(t)}_{A^0}\,.
\end{align}
From this bound, the sought inequality easily follows.
\end{proof}

We conclude this part with a remark concerning the integrability of the function $g$ for large times $T\to+\infty$.
This will be useful in the application of Theorem \ref{t:stab} to the proof of existence of solutions, to be carried out in Section \ref{s:proof-exist} below.

\begin{remark} \label{r:Gronw}
Should one strengthen the assumptions of Theorem \ref{t:stab}, the integrability of the function $g$ appearing in \eqref{est:pre-Gronw}
does not improve, in the following sense.

Assume that the two solutions $u$ and $v$ in the assumptions of Theorem \ref{t:stab} are globally defined, and further belong to the space $X_{0,4}$,
instead of $X_{0,4}(T)$ for finite $T$. Assume also that estimate \eqref{eq:t0-stab} holds true for a constant $K$ which is independent of the given $T>0$.
Still, by definition,
we can only deduce that the function $g$ belongs to $L^1_\loc(\R_+)$. In other words, one does not gain global $L^1$ integrability of $g$ over $\R_+$.
\end{remark}


\section{Existence} \label{s:proof-exist}

In this section, given any initial datum $u_0\in A^0(\T^d)$, we prove the existence of a solution $u\in X_{0,4}$ to the Cauchy problem \eqref{eq:parab}.
In particular, together with the uniqueness part shown in Section \ref{s:proof-unique}, this argument will conclude the proof of Theorem \ref{teo:wp}.

We follow a standard Galerkin method. As a first step of the proof of existence, we introduce a regularised system by projecting the equation
onto a finite number of Fourier modes. Then, we prove uniform bounds for the family of solutions to the regularised system. Finally, we show convergence
to a solution of the original system \eqref{eq:parab} by use of the stability estimates of Theorem \ref{t:stab}. 
This last step will imply in particular that the sequence of approximate solutions is a Cauchy sequence in $X_{0,4}$, thus yielding that the solution
belongs to that space as well (as we announced in the Introduction).

\subsection{Existence for a regularised system and uniform bounds} \label{ss:regul}

For any $M\in\NN\setminus\{0\}$, let us denote by $\P_Mf$ the projection of a function $f$ onto its Fourier modes of index $|k|\le M$, that is
\[
\widehat{\P_M f}(k)\, := \,\mathds{1}_{\{k\in\Z^d\,|\;|k|\leq M\}}\,\what{f}(k)\,,
\]
where we have denoted by $\mds{1}_A$ the characteristic function of a set $A\subset \R^d$. Recall that the operator $\P_M$ commutes with
the space and time derivatives.

We now consider the regularised problem
\begin{equation}\label{eq:parab-appr}
\begin{cases}
\begin{array}{ll}
\ds\pat v\,+\,\al\, \D^2v\,=\,\P_M\big[N(t,x,\P_Mv,\ldots,D^4\P_Mv)\big] &\qquad  \mbox{ in }\q  \R_+\!\times\,\TT^d, \\
\ds v\bigl|_{t=0}\,=\,\P_Mu_0 &\qquad \mbox{ in }\q\, \TT^d\,.
\end{array}
\end{cases}
\end{equation}
We are going to prove the following statement.

\begin{proposition}\label{prop:prelim}
Suppose that the non-linearity $N$ satisfies assumptions {\rm \tbf{(A1)}}, {\rm \tbf{(A2)}} and  {\rm \tbf{(A3)}}
formulated in Subsection \ref{ss:assumpt}.
Take an initial datum $u_0\in A^0(\T^d)$ such that $\lan u_0\ran = 0$ and $\|u_0\|_{A^0}\leq\veps_0$, where $\veps_0>0$ is the parameter appearing
in the statement of Theorem \ref{teo:wp}.

Then, for any $M\in\NN\setminus\{0\}$, there exists a unique solution $u_M$ to system \eqref{eq:parab-appr} belonging to $X_{0,4}$ and such that
$\lan u_M(t)\ran=0$ for any $t\geq0$. In addition, $u_M$ satisfies the property
$u_M\,=\,\P_Mu_M$, together with the estimate
\begin{equation}
    \label{estim}
    \|u_M\|_{X_{0,4}}\, \lesssim\, \|u_0\|_{A_0}\,,
\end{equation}
for a suitable (implicit) positive multiplicative constant, only depending on the diffusion coefficient $\alpha>0$ and on $\veps_0$, but not on $M\in\NN\setminus\{0\}$.
\end{proposition}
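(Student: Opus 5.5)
The plan is to treat \eqref{eq:parab-appr} as an ODE system on the finite-dimensional space $V_M:=\P_M L^2(\T^d)$, prove local existence there, and then make it global by an a priori bound in $X_{0,4}$ that is uniform in $M$.

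\emph{Step 1: local existence.} First I check that $\P_M v$ determines the right-hand side, and that the right-hand side lies in $V_M$. The linear part preserves $V_M$. Applying $\P_M$ to the equation and to the datum shows that $w:=v-\P_Mv$ solves $\pat w+\al\D^2 w=0$ with $w(0)=0$, so $w\equiv0$ and $v=\P_Mv$. Hence \eqref{eq:parab-appr} is an ODE $\frac{d}{dt}\what v(k)=-\al|k|^4\what v(k)+\widehat{\P_M N(\cdot)}(k)$ for $|k|\le M$.

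On $V_M$ all norms are equivalent, and all derivatives $D^j v$ are bounded by $C_M\|v\|$. By \textbf{(A2)} the map $v\mapsto \P_M N(t,x,v,\dots,D^4v)$ is locally Lipschitz on $V_M$, uniformly for $t$ in compact sets. Continuity in $t$ comes from \textbf{(A1)}, using the $L^\infty(A^0)$ control of $\s_0$ and $\s_1^{(j)}$; if needed I would use only Carathéodory conditions, since integrability in $t$ is enough.

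Cauchy--Lipschitz then gives a unique maximal solution $u_M\in\mc C^1([0,T^*);V_M)$. For zero mean: the $k=0$ mode satisfies $\frac{d}{dt}\what{u}_M(0)=\langle N(\dots u_M)\rangle=0$ by \textbf{(A3)}, because $u_M$ is smooth. So $\lan u_M(t)\ran=\lan \P_M u_0\ran=0$.

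\emph{Step 2: uniform a priori estimate.} Arguing as in \eqref{eq:diff} with \eqref{eq:pat}, and using $\|\P_M f\|_{A^0}\le\|f\|_{A^0}$, I get
\[
\frac{d}{dt}\|u_M\|_{A^0}+\al\|u_M\|_{A^4}\le \|N(t,x,u_M,\dots,D^4u_M)\|_{A^0}.
\]
Since $u_M$ is mean free, $\|\cdot\|_{L^\infty}\le\|\cdot\|_{A^0}$ and $A^0$ is an algebra. With \textbf{(A1)} this gives
\[
\|N\|_{A^0}\lesssim \|\s_0\|_{L^\infty(A^0)}\sum_{j=1}^4\|u_M\|_{A^0}^{\ell_j}\|u_M\|_{A^j}^{4/j}.
\]
Here I use that $A^0$-norms of powers are bounded by powers of norms, which is how the paper handles (A2) in Section \ref{s:proof-unique}. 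The interpolation inequality \eqref{interpol} gives $\|u_M\|_{A^j}^{4/j}\le\|u_M\|_{A^0}^{4/j-1}\|u_M\|_{A^4}$. Therefore
\[
\frac{d}{dt}\|u_M\|_{A^0}+\Big(\al-C\sum_j\|u_M\|_{A^0}^{\ell_j+4/j-1}\Big)\|u_M\|_{A^4}\le0.
\]

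\emph{Step 3: continuity argument.} All the exponents $\ell_j+4/j-1$ are positive; this needs $\ell_4>0$ in the case $j=4$. Choose $\veps_0$ so small that $C\sum_j(2\veps_0)^{\ell_j+4/j-1}\le\al/2$. Let $T_1$ be the supremum of times on which $\|u_M\|_{A^0}\le2\veps_0$. On $[0,T_1)$ the norm $\|u_M\|_{A^0}$ is nonincreasing, so it stays $\le\|\P_Mu_0\|_{A^0}\le\veps_0<2\veps_0$. Hence $T_1=T^*$. Because $\|\cdot\|_{A^0}$ is a norm on mean-free elements of $V_M$, the solution cannot blow up, and so $T^*=\infty$. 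Integrating in time gives
\[
\sup_t\|u_M\|_{A^0}+\frac{\al}{2}\int_0^\infty\|u_M\|_{A^4}\le\|u_0\|_{A^0},
\]
which is \eqref{estim}, with constants independent of $M$. Continuity with values in $A^0$ is immediate in finite dimension, so $u_M\in X_{0,4}$.

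\emph{Uniqueness and main obstacle.} Uniqueness in $X_{0,4}$ comes from the ODE uniqueness, after first showing that any $X_{0,4}$ solution lies in $V_M$ by the projection argument of Step 1. The main obstacle I expect is the bound on $\|N(\dots)\|_{A^0}$ from the pointwise domination \textbf{(A1)}. A pointwise inequality does not by itself control Wiener norms. I would handle it the same way the paper treats \textbf{(A2)} in the stability proof, reading the assumptions as bounds that transfer to $A^0$ through the algebra property. The smallness threshold in Step 3 then fixes $\veps_0$.
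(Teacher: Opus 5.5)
Your proposal is correct and follows essentially the paper's route: a Galerkin ODE solved by Cauchy--Lipschitz, mean preservation from \textbf{(A3)}, the $A^0$ energy inequality with \textbf{(A1)} transferred to Wiener norms through products and the interpolation inequality \eqref{interpol}, a continuity argument fixing $\veps_0$, and the blow-up alternative giving $T_M=+\infty$. The paper also takes the \textbf{(A1)} transfer step without further justification, as you rightly flag.

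The differences are minor. You get $u_M=\P_Mu_M$ and uniqueness directly: the modes with $|k|>M$ solve the homogeneous linear equation with zero data, and then ODE uniqueness on $V_M$ applies with no smallness needed. The paper instead invokes the stability estimates of Theorem \ref{t:stab}. Your exponents $\ell_j+4/j-1$ are also the correct ones, whereas the paper's $\ell_3+1$ should read $\ell_3+1/3$; this is harmless, since only positivity of the exponents matters.
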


\begin{proof}
First of all, we notice that uniqueness of solutions in the space $X_{0,4}$ follows again from the stability estimates of Theorem \ref{t:stab}, see also Section
\ref{s:proof-unique}.
In addition, since the projection operator $\P_M$ commutes with derivatives and since $\P_M^2\,=\,\P_M$,
one has that $u_M$ and $\P_Mu_M$ solve  the same equation \eqref{eq:parab-appr} related to the same mean-free initial datum
$\P_Mu_0$. Thus, by uniqueness, the property $u_M=\P_Mu_M$ follows.

Next, we focus on the proof of existence.
For this, we introduce the space
\[
 A^0_0(\T^d)\,:=\,\Big\{f\,\in\,A^0(\T^d)\;\;\Big|\;\; \lan f\ran \,=\,0\Big\}\,.
\]
Observe that, since we have removed the mean value (namely, the ambiguity corresponding to the frequency $k=0$), the space $A^0_0(\T^d)$, endowed with the $A^0$-norm,
is a Banach space.
Then, we rewrite system \eqref{eq:parab-appr} as an ODE with values in the Banach space $A^0_0(\TT^d)$:
\begin{equation}
    \label{eq:parab-diff}
    \frac{d}{dt}v(t)\, =\, F_M\big(t,v(t)\big)\,,\qq\qq {v}\bigl|_{t=0}\,=\,\P_Mu_0\,,
\end{equation}
where we have defined $F_M:\R_+\!\times\,A^0_0(\T^d)\longrightarrow A^0_0(\T^d)$ by the formula
\[
F_M\big(t,v\big)\, :=\, -\,\al\,\D^2\P_Mv\, +\,\P_M \big[N(t,x,\P_Mv,\ldots,D^4\P_Mv)\big].
\]
Observe that writing $\D^2\P_Mv$ instead of $\D^2v$ is harmless here. Indeed, by the same token as above, one can show that, for any fixed $M\in\NN\setminus\{0\}$, the constructed solution will live in the space of smooth funtions $f$ satisfying
the property $\supp \what f(t,\cdot)\,\subset\,\big\{k\in\Z^d\,\big|\;|k|\leq M\big\}$, for any $t\geq0$.

It is now a routine matter to show
that, for any compact time interval $[0,T]$, the map $F_M$ is (uniformly on $[0,T]$) locally Lipschitz continuous (with respect to the second variable) on $A^0(\TT^d)$.
Indeed, let $v_1,v_2\,\in A^0(\TT^d)$.  First of all, thanks to the frequency localisation operator $\P_M$, one has
\[
\begin{split}
    \|\D^2\P_Mv_1-\D^2\P_Mv_2\|_{A^0}\, &=\,\|\P_Mv_1-\P_Mv_2\|_{A^4}\,\leq\,M^4\, \|v_1-v_2\|_{A^0}\,.
\end{split}
\]
Next, following the computations of Section \ref{s:proof-unique}, we can bound
\[
\begin{split}
    &\big\|\P_M\big[N(t,x,\P_Mv_1,\ldots,D^4\P_Mv_1)\big]\,-\,\P_M\big[N(t,x,\P_Mv_2,\ldots,D^4\P_Mv_2)\big]\big\|_{A^0}  \\
&\qquad\qquad\qquad\qquad\qquad\qquad\qquad
\lesssim\, \omega_0\,\norm{\P_Mv_1-\P_Mv_2}_{A^4}\,+\,g_M\,\norm{\P_Mv_1-\P_Mv_2}_{A^0} \\
&\qquad\qquad\qquad\qquad\qquad\qquad\qquad
\lesssim\, \left(\omega_0\, M^4\, +\, g_M\right)\, \norm{v_1-v_2}_{A^0}\,,
\end{split}
\]
with $\o_0$ as in \eqref{eq:om0} (up to replacing $v_0$ by $v_2$), and where we have defined
\[
\begin{split}
g_M\,&:=\,\norm{\si_1^{(0)}}_{L^\infty_T(A^0)}\,\norm{\big(\P_Mv_1,\P_Mv_2\big)}_{A^0}^{m_0}\,
\pare{1+\norm{\P_Mv_1}_{A^1}^4+\norm{\P_Mv_1}_{A^2}^2+\norm{\P_Mv_1}_{A^3}^{4/3}+\norm{\P_Mv_1}_{A^4}}\\
&\qq+\,\norm{\si_1^{(1)}}_{L^\infty_T(A^0)}^{4/3}\,\pare{1+\norm{\big(\P_Mv_1,\P_Mv_2\big)}_{A^1}^4+\norm{\P_Mv_1}_{A^2}^{2}+
\norm{\P_Mv_1}_{A^3}^{4/3}+\norm{\P_Mv_1}_{A^4}} \\
&\qq+\,\norm{\si_1^{(2)}}_{L^\infty_T(A^0)}^2\,\pare{1+\norm{\P_Mv_2}_{A^1}^4+\norm{\big(\P_Mv_1,\P_Mv_2\big)}_{A^2}^2+\norm{\P_Mv_1}_{A^3}^{4/3}+\norm{\P_Mv_1}_{A^4}} \\
&\qq+\,\norm{\si_1^{(3)}}_{L^\infty_T(A^0)}^4\,\pare{1+\norm{\P_Mv_2}_{A^1}^4+\norm{\P_Mv_2}_{A^2}^2+\norm{\big(\P_Mv_1,\P_Mv_2\big)}_{A^3}^{4/3}+\norm{\P_Mv_1}_{A^4}}\,.
\end{split}
\]
Here above, we have adopted again the convenient notation $\|(a,b)\|_{A^k}:=\|a\|_{A^k}+\|b\|_{A^k}$ for $k\in\{0,1,2,3\}$.
Thus, arguing as done above for the diffusion term, we can bound $g_M$ as
\[
g_M\, \leq\, C(M)\,\Big(1\,+\,\|v_1\|_{A^0}\,+\,\|v_2\|_{A^0}\Big)^\gamma\,,
\]
for some constant $C(M)>0$ only depending on the fixed $M\in\NN\setminus\{0\}$, and for a suitable exponent $\gamma>0$.
All in all, we have proven that
\[
\big\|F_M(t,v_1)\,-\,F_M(t,v_2)\big\|_{A^0}\, \leq\,C\big(M,\|v_1\|_{A^0},\|v_2\|_{A^0}\big)\,\|v_1-v_2\|_{A^0}\,,
\]
as claimed.
Thus, by the Picard-Lindel\"of theorem, there exists a time $T_M\in\;]0,+\infty]$ and a unique maximal solution $u_M\in\mathcal C^1\big([0,T_M[\,;A^0_0(\TT^d)\big)$
to equation \eqref{eq:parab-diff}, or equivalently equation \eqref{eq:parab-appr}.

We now prove that $u_M\in L^1\big([0,T_M[\,;A^4(\TT^d)\big)$. Actually, as a byproduct of our argument, we will also exhibit
\emph{uniform} bounds both with respect to the time variable $T>0$ and to the index $M\in\NN\setminus\{0\}$.
To this end, we observe that, using formula \eqref{eq:pat}, from equation \eqref{eq:parab-appr} we get
\begin{align*}
\frac{d}{dt}\norm{u_M(t)}_{A^0}\,+\,\al\,\norm{u_M(t)}_{A^4}\,&\leq\,\norm{N(t,x,u_M,\ldots,D^4 u_M)}_{A^0}\,.
\end{align*}
Thanks to assumption \tbf{(A1)} on $N$, we have the bound
\begin{align*}
&\av{N(t,x,u_M,\ldots,D^4u_M)} \\
&\lesssim \norm{\si_0}_{L^\infty(A^0)}\,\biggl(\av{u_M}^{\ell_1}\,\av{Du_M}^4\,+\,\av{u_M}^{\ell_2}\,\av{D^2u_M}^2\,+\,\av{u_M}^{\ell_3}\,\av{D^3u_M}^{4/3}\,+\,
\av{u_M}^{\ell_4}\,\av{D^4u_M}\biggr)\,.
\end{align*}
Making use of the interpolation inequality \eqref{interpol}, we estimate
\begin{align*}
\norm{\av{u_M}^{\ell_1}\,\av{Du_M}^4}_{A^0}\,&\le\, \norm{u_M}_{A^0}^{\ell_1}\,\norm{u_M}_{A^1}^4\, \le\,\norm{u_M}_{A^0}^{\ell_1+1}\,\norm{u_M}_{A^4}\,,
\\
 \norm{\av{u_M}^{\ell_2}\,\av{D^2u_M}^2}_{A^0}\,&\le\, \norm{u_M}_{A^0}^{\ell_2}\,\norm{u_M}_{A^2}^2\,\le\,\norm{u_M}_{A^0}^{\ell_2+1}\,\norm{u_M}_{A^4}\,,
\\
\norm{\av{u_M}^{\ell_3}\, \av{D^3u_M}^{4/3}}_{A^0}\,&\le\, \norm{u_M}_{A^0}^{\ell_3}\, \norm{u_M}_{A^3}^{4/3}\,\le\,\norm{u_M}_{A^0}^{\ell_3+1}\,\norm{u_M}_{A^4}\,.
\end{align*}
Hence, we can finally control the $A^0$-norm of the non-linearity as
\[
\norm{N(t,x,u_M,\ldots,D^4 u_M)}_{A^0}\,\lesssim\,
\norm{\si_0}_{L^\infty(A^0)}\,\left(\norm{u_M(t)}_{A^0}^{{\ell}^*}\,+\,\norm{u_M(t)}_{A^0}^{\ell_*}\right)\,\norm{u_M(t)}_{A^4}\,,
\]
where we have defined the two exponents
\[
\ell^*\,:=\,\max\set{\ell_1+1,\ell_2+1,\ell_3+1,\ell_4}\qquad \mbox{ and}\qquad
\ell_*\,:=\,\min\set{\ell_1+1,\ell_2+1,\ell_3+1,\ell_4}\,.
\]
Notice that both exponents are strictly positive.
%
%
Then, the previous bound implies that
\begin{align} \label{est:u_M}
\frac{d}{dt}\norm{u_M(t)}_{A^0}\,+\,\al\, \norm{u_M(t)}_{A^4}\,\leq\,C_0\,\norm{\si_0}_{L^\infty(A^0)}\,
\left(\norm{u_M(t)}_{A^0}^{{\ell}^*}\,+\,\norm{u_M(t)}_{A^0}^{\ell_*}\right)\,\norm{u_M(t)}_{A^4}\,,
\end{align}
for a suitable constant $C_0>0$ independent of the datum $u_0$, the parameter
$M\in\NN\setminus\{0\}$ and the time $T>0$.

Observe that the same computations leading to \eqref{est:u_M} also imply that
\[
 \left\|\frac{d}{dt}u_M(t)\right\|_{A^0}\,\leq\,M^4\,\bigg(\alpha\,+\,C_0\,\norm{\si_0}_{L^\infty(A^0)}\,
\left(\norm{u_M(t)}_{A^0}^{{\ell}^*}\,+\,\norm{u_M(t)}_{A^0}^{\ell_*}\right)\bigg)\,\norm{u_M(t)}_{A^0}\,.
\]
In particular, if $\norm{u_M(t)}_{A^0}$ remains bounded on some time interval $[0,T[\,$, then so does $\left\|\frac{d}{dt}u_M(t)\right\|_{A^0}$.
This implies that, for any fixed $M\in\NN\setminus\{0\}$, the function $u_M(t)$ satisfies the Cauchy criterion for $t\to T^-$, thus it can be extended
beyond the time $T$ into an $A^0$-solution. The consequence of this argument is that, if we are able to exhibit a uniform (in time) bound
for $\norm{u_M(t)}_{A^0}$, then we will in particular deduce that $T_M=+\infty$.

Keeping these considerations in mind, let us come back to \eqref{est:u_M}.
Up to increasing the value of the constant $C_0>0$, we can assume without loss of generality that
\begin{equation} \label{eq:ass_small}
\frac{\al}{C_0\,\norm{\si_0}_{L^\infty(A^0)}}\,<\,1\,.
\end{equation}
At this point, for simplicity of notation we set $\ell=\ell_*$. Recall that $\ell>0$.
We then define the parameter $\veps_0>0$ appearing in the statement of Theorem \ref{teo:wp} as
\[
 \veps_0\,:=\,\frac{1}{2}\,\left(\frac{\al}{2\,C_0\,\norm{\si_0}_{L^\infty(A^0)}}\right)^{1/\ell}
\]
and suppose that  $\|u_0\|_{A^0}\leq\veps_0$, accordingly with the assumptions of that statement.
Observe that, in view of \eqref{eq:ass_small}, we have $\veps_0<1/2$.

To proceed further, we define also the time $T^*>0$ as
\begin{align*}
T^*\,&:=\,\sup\set{t\in[0,T_M[\q\Big|\q \norm{u_M(t)}_{A^0}\,\le\,2\,\veps_0\,}\,.
\end{align*}
We remark that, by time continuity of the solution $u_M$ with values in $A^0$, the time $T^*>0$ is indeed well defined. In principle,
this time may depend on $M\in\NN\setminus\{0\}$, however we will check \tsl{a posteriori} that this is not the case and, actually,
one has $T^*=T_M=+\infty$.

As a matter of fact, from the inequality $\veps_0<1/2$ we deduce that $\|u_M\|_{A^0}$ remains smaller than $1$ on $[0,T^*[\,$. Thus, recalling \eqref{est:u_M}
and that we have set $\ell_*=\ell$, we get
\[
\frac{d}{dt}\norm{u_M(t)}_{A^0}\,+\,\al\, \norm{u_M(t)}_{A^4}\,\leq\,C_0\,\norm{\si_0}_{L^\infty(A^0)}\,\norm{u_M(t)}_{A^0}^{\ell}\,\norm{u_M(t)}_{A^4}\,,
\]
which implies, by definition of the time $T^*$ and of $\veps_0$, that
\[
\forall\,t\in[0,T^*[\,,\qquad \frac{d}{dt}\norm{u_M(t)}_{A^0}\,+\,\frac{\al}{2}\,\norm{u_M(t)}_{A^4}\,\le\,0\,.
\]
After integration in time and thanks to the fact that the operator $\P_M$ acts continuously on $A^0$, from this relation we infer that
\[
\forall\,t\in[0,T^*[\,,\qquad \norm{u_M(t)}_{A^0}\,+\,\frac{\al}{2}\,\int^t_0\norm{u_M(\tau)}_{A^4}\,d\tau\,\le\,\left\|u_0\right\|_{A^0}\,.
\]
With this bound at hand, it is then easy to run a contradiction argument, showing that one actually has
$T^*=T_M$.
%
At the same time, this also proves a uniform bound (with respect to the approximation parameter $M\in\NN\setminus\{0\}$) for the family of smooth
solutions $(u_M)_{M\geq1}$ in the space $X_{0,4}$. Namely, we have
\[
\forall\,t\in[0,T_M[\,,\qquad \|u_M(t)\|_{A^0}\, +\, \int_0^t \|u_M(\tau)\|_{A^4}\,d\tau\, \leq\, \|u_0\|_{A^0}\,.
\]
Since the constant on the right-hand side is independent of $M\in\NN\setminus\{0\}$ and of the time $t>0$, the previous inequality provides us with the sought
uniform-in-time bound for the quantity $\|u_M(t)\|_{A^0}$, thus implying (by the considerations above) that $T_M=+\infty$.

The proof of Proposition \ref{prop:prelim} is now complete.
\end{proof}

\subsection{Convergence: conclusion of the existence proof} \label{ss:converg}

Proposition \ref{prop:prelim} provides us with a sequence $(u_M)_{M\geq1}$ of smooth, mean-free, approximate solutions to problem \eqref{eq:parab},
which is uniformly bounded in the space $X_{0,4}$, with the uniform bound given in \eqref{estim}.

Now, up to reducing the value of the parameter $\veps_0$ defined above, we can further assume that $\veps_0\leq\veps_1$, where $\veps_1$ is the parameter
appearing in the statement of Theorem \ref{t:stab}. Thus, we can apply the stability estimates of that statement to two elements $u_M$ and $u_L$,
for some $M,L\,\in\NN\setminus\{0\}$, of the sequence of smooth approximate solutions. Thanks to \eqref{estim}, which involves
a constant factor which is \emph{independent} of time, arguing as in Section \ref{s:proof-unique} yields
\[
\norm{u_M(t)-u_L(t)}_{A^0}\,+\,\int_{0}^{t}\norm{u_M(\tau)-u_L(\tau)}_{A^4}\,d\tau\, \lesssim\, \norm{\P_Mu_0-\P_Lu_0}_{A^0}
\]
for any time $t>0$ arbitrarily fixed, but finite (as
the implicit multiplicative constant does depend on $t>0$, see also Remark \ref{r:Gronw}).
Since the sequence of approximate initial data $(\P_Mu_0)_{M\geq1}$ converges strongly to $u_0$ in $A^0(\TT^d)$, it is a Cauchy sequence in that space.
Therefore, the previous inequality implies that the sequence $(u_M)_{M\geq1}$ is a Cauchy sequence in the Banach space $X_{0,4}(T)$ for any
time $T>0$ (where we also use the mean-free property of the solutions), so it strongly converges in that space to some unique $u\in X_{0,4}(T)$.
Using this strong convergence property, it is a routine matter to check (use the weak formulation \eqref{eq:weak-form}, for instance)
that the target $u$ is a solution to equation \eqref{eq:parab} on $\R_+\!\times\,\TT^d$, related to the initial datum $u_0$.

So far, we have produced a solution $u$, defined globally in time, but such that one only has
\[
 u\,\in\,\bigcap_{T>0}X_{0,4}(T)\,,
\]
which however is a strictly larger space than $X_{0,4}$. In order to see that $u\in X_{0,4}$, we go back to inequality
\eqref{estim}: by use of the previous strong convergence properties on finite time intervals, we infer that, for any $T>0$ fixed, one has
\[
 \sup_{t\in[0,T]}\left\|u(t)\right\|_{A^0}\,+\,\int^T_0\left\|u(\tau)\right\|_{A^4}\,d\tau\,\lesssim\,\left\|u_0\right\|_{A^0}\,,
\]
for an implicit multiplicative constant $C>0$ independent of the time $T$.
Passing to the limit $T\to+\infty$ in turn yields the sought property $u\in X_{0,4}$.

In the end, this completes the proof of the existence statement of Theorem \ref{teo:wp}. Together with the proof of uniqueness,
which is a consequence of Theorem \ref{t:stab}
(see Section \ref{s:proof-unique} for details), we have completed the proof of Theorem \ref{teo:wp}.

\section{Generalisations} \label{s:general}

In this section, we propose some possible generalisations of the theory developed for problem \eqref{eq:parab}.
We first extend the results of Theorems \ref{teo:wp} and \ref{t:stab} to the case of more regular initial data, \tsl{i.e.} of data such
that $u_0\in A^s(\TT^d)$ for some $s>0$. Then, we show gain of analyticity of the solution in finite time.
Finally, we study the case of higher-order parabolic operators, in which we will adapt assumptions {\bf(A1)} and {\bf (A2)} to the new non-linearity $N$.

How to consider combinations of these three aspects (for instance, propagation of higher regularity, or gain of analyticity, for higher-order operators)
will easily follow from our discussion, and is left to the interested reader.

\subsection{Propagation of higher regularities} \label{ss:higher-reg}

In this subsection, we consider again the problem in \eqref{eq:parab}, but now assume that the initial datum enjoys higher regularity, {\sl i.e.}
we assume $u_0\in A^s(\TT^d)$, for some $s>0$.
In this case, we look for solutions belonging to
\begin{equation*}\label{eq:Xs4}
X_{s,4+s}\,:=\,
\mc C_b\big(\!\R_+;A^s(\T^d)\big)\,\cap\,L^1\big(\!\R_+;A^{4+s}(\T^d)\big)\,.
\end{equation*}
By making use of the interpolation inequality
\begin{equation}\label{interpol2}
\|u\|_{A^n}\,\le\,\|u\|_{A^p}^{1-\theta}\,\|u\|_{A^q}^{\theta}\,,\qquad\qquad \mbox{ for }\qquad \theta=\frac{n-p}{q-p}\,,
\end{equation}
it is easy to see that, if $u\in X_{s,4+s}$, then one also has
\begin{equation}\label{eq:interpol_Xs}
 u\,\in\, L^{4}\big(\!\R_+;A^{s+1}(\T^d)\big)\,\cap\,L^{2}\big(\!\R_+;A^{s+2}(\T^d)\big)\,\cap\,L^{4/3}\big(\!\R_+;A^{s+3}(\T^d)\big)\,.
\end{equation}

The following statements can be obtained by rather direct adaptations of the arguments developed in Sections \ref{s:proof-unique} and \ref{s:proof-exist}. Therefore,
we omit their proofs.

\begin{theorem}[Well-posedness]\label{teo:wps}
Consider the Cauchy problem \eqref{eq:parab}, under assumptions {\rm \tbf{(A1)}}, {\rm \tbf{(A2)}} and {\rm \tbf{(A3)}}
on the non-linearity $N$. Let $s>0$.

Then, there exists $\veps_0>0$ such that, for any $u_0\in A^s(\T^d)$ satisfying $\lan u_0\ran =0$ and $\left\|u_0\right\|_{A^s}\,\leq\,\veps_0$,
the unique mean-free solution $u\in X_{0,4}$ to the Cauchy problem \eqref{eq:parab}, given by Theorem \ref{teo:wp}, satisfies in addition the property
$u\in X_{s,4+s}$.
Moreover, there exists a constant $C\,=\,C(\alpha,\veps_0)\,>\,0$ only depending on the quantities inside the brackets, such that
\begin{equation} \label{est:solutions}
 \|u\|_{X_{s,4+s}}\,\leq\,C\,\left\|u_0\right\|_{A^s}\,.
\end{equation}
\end{theorem}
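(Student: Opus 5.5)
We argue by an a priori estimate on the Galerkin approximations $u_M$ of Proposition \ref{prop:prelim}, now in the $A^s$ norm, and then pass to the limit. Since $\langle u_0\rangle=0$, we have $\|u_0\|_{A^0}\le\|u_0\|_{A^s}\le\veps_0$, so Theorem \ref{teo:wp} and Proposition \ref{prop:prelim} apply. This gives the unique solution $u\in X_{0,4}$ and the uniform bound $\|u_M\|_{X_{0,4}}\lesssim\|u_0\|_{A^0}$. It remains to control $u_M$ in $X_{s,4+s}$ uniformly in $M$ and in time.

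\textbf{Energy inequality.} Multiply \eqref{eq:pat} by $|k|^s$ and sum over $k$. This gives
\[
\frac{d}{dt}\|u_M\|_{A^s}+\alpha\|u_M\|_{A^{4+s}}\le \big\|N(t,x,u_M,\ldots,D^4u_M)\big\|_{A^s}.
\]

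\textbf{Estimating the nonlinearity.} This is the main obstacle, because \textbf{(A1)} only bounds $|N|$ pointwise, whereas here we need the $A^s$ norm of a composition. My plan:
\begin{itemize}
\item Use the fractional Leibniz rule in Wiener algebras, $\|fg\|_{A^s}\lesssim\|f\|_{A^s}\|g\|_{A^0}+\|f\|_{A^0}\|g\|_{A^s}$, which follows from $|k|^s\le C(|k-j|^s+|j|^s)$.
\item Treat the composition through a chain-rule/Taylor representation, in the spirit of the Taylor expansion used in Section \ref{s:proof-unique}. For integer $s$, write $D^sN$ via the chain rule, so that \textbf{(A2)} controls each $\nabla_{p_j}N$. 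For general $s$, interpolate, or use the difference characterisation of $A^s$ together with the Lipschitz bound \textbf{(A2)}.
\item Expected outcome: $\|N\|_{A^s}$ is a sum of terms in which exactly one factor carries the extra $s$ derivatives, i.e.\ one $D^ju_M$ is measured in $A^{s}$ (equivalently $u_M$ in $A^{j+s}$), while all remaining factors are in $A^0$-type norms.
\item Using the structure of \eqref{hyp:Lip-N}, each such term should be bounded by $\|u_M\|_{A^0}^{m}\,\Phi(u_M)\,\|u_M\|_{A^{s+j}}$, where $\Phi$ collects $\|u_M\|_{A^1}^{4-j}$, $\|u_M\|_{A^2}^{(4-j)/2}$, etc.
\item Apply the interpolation \eqref{interpol2} between $A^0$ and $A^4$, and between $A^s$ and $A^{4+s}$, exactly as in Section \ref{s:proof-exist}. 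This should yield
\[
\|N\|_{A^s}\lesssim \omega\,\|u_M\|_{A^{4+s}}+h(t)\,\|u_M\|_{A^s},
\]
where $\omega$ is a positive power of $\|u_0\|_{A^0}$ (small), and $h\in L^1(\R_+)$ is built from $\|u_M\|_{A^4}$ and powers of $\|u_M\|_{A^0}$, with $\int_0^\infty h\lesssim \|u_0\|_{A^0}^{\gamma}$.
\end{itemize}
The linear-in-$p_4$ structure (Remark \ref{rmk:p4}) is crucial: the top-order term gives $\sigma\,|u|^{\ell_4}\,\|u_M\|_{A^{4+s}}$ plus lower-order commutator pieces, and the former is absorbed.

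\textbf{Gronwall and the role of time integrability.} A point I expect to need care is that $h$ must be globally integrable. Unlike the function $g$ of Remark \ref{r:Gronw}, this should hold, because every term carries at least one factor $\|u_M\|_{A^4}$ or a product whose Hölder exponents sum to $1$ in time, with no "$1+$" constant. If a $1+\cdots$ factor coming from \textbf{(A2)} appears, I would instead estimate the top-order part via \textbf{(A1)}-type homogeneity, differentiating the dominated expression.

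\textbf{Passing to the limit.} Taking $\veps_0$ small so that $C\omega\le\alpha/2$, Gronwall gives
\[
\|u_M\|_{L^\infty(A^s)}+\int_0^\infty\|u_M\|_{A^{4+s}}\le C\,\|u_0\|_{A^s},
\]
uniformly in $M$. The $u_M$ converge to $u$ in $X_{0,4}(T)$, hence $\widehat{u_M}(k)\to\widehat u(k)$ for each $k$. By Fatou's lemma in $k$ and in $t$, we obtain $u\in L^\infty(A^s)\cap L^1(A^{4+s})$ with \eqref{est:solutions}. Time continuity in $A^s$ follows from $\partial_t u\in L^1(A^s)$.
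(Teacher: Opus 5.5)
\textbf{Gap in your proposal.} Your overall architecture (Galerkin approximations, $A^s$ energy inequality, absorption, Fatou) is what the paper means when it calls this proof a direct adaptation of Sections \ref{s:proof-unique}--\ref{s:proof-exist}. However, the only genuinely new step, the bound on $\|N\|_{A^s}$, is left at the level of ``should yield'', and the mechanism you propose for it breaks down.

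\emph{Chain rule.} A chain rule for integer $s$ produces explicit $x$-derivatives of $N$ and, for $s\ge2$, the Hessian $\nabla^2_{\mathfrak p}N$. Neither \textbf{(A1)} nor \textbf{(A2)} controls these. Moreover, $A^s$ is defined by a weighted $\ell^1$ condition on Fourier coefficients and has no difference characterisation through which a pointwise Lipschitz bound would control it.

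\emph{The ``$1+$'' term in (A2).} This is the more serious problem. \textbf{(A2)} contains the ``$1+$'' term and allows $m_j=0$, as in every model of Section \ref{s:applic}. When the extra $s$ derivatives fall on $D^ju_M$ with $j\le3$ and the factor $1$ in \eqref{hyp:Lip-N} is selected, you get a contribution $\|\sigma_1^{(j)}\|_{L^\infty(A^0)}\|u_M\|_{A^{j+s}}$ with no small factor. It cannot be absorbed, and after Young it leaves a constant in $h$. This is exactly the obstruction of Remark \ref{r:Gronw}. Gronwall then only gives $\|u_M(t)\|_{A^s}\lesssim e^{Ct}\|u_0\|_{A^s}$, which proves neither $u\in X_{s,4+s}$ nor \eqref{est:solutions}. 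Your fallback of ``differentiating the dominated expression'' is not an argument, since a pointwise inequality cannot be differentiated.

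\textbf{The missing idea.} Adapt Section \ref{s:proof-exist}, not Section \ref{s:proof-unique}. The a priori bound should rest on \textbf{(A1)} alone, which has no ``$1+$'' and in which every term has derivative count exactly $4$.
\begin{enumerate}
\item Estimate $N$ through the model products $|u_M|^{\ell_j}|D^ju_M|^{4/j}$, at the same level of rigour as Section \ref{s:proof-exist}.
\item Apply your Leibniz rule.
\item Interpolate every factor between $A^0$ and $A^{4+s}$, namely $\|u\|_{A^a}\le\|u\|_{A^0}^{1-a/(4+s)}\|u\|_{A^{4+s}}^{a/(4+s)}$.
\end{enumerate}
Since the total derivative count is then exactly $4+s$, each term becomes a positive power of $\|u_M\|_{A^0}$ times $\|u_M\|_{A^{4+s}}$. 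For instance, $\|u_M\|_{A^0}^{\ell_1}\|u_M\|_{A^1}^3\|u_M\|_{A^{1+s}}\le\|u_M\|_{A^0}^{\ell_1+3}\|u_M\|_{A^{4+s}}$.

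Proposition \ref{prop:prelim} gives $\|u_M(t)\|_{A^0}\le\|u_0\|_{A^0}\le\|u_0\|_{A^s}\le\veps_0$ for all $t$. Hence, for $\veps_0$ small, the right-hand side is absorbed directly, with no Gronwall step and no time-integrability issue:
\[
\|u_M(t)\|_{A^s}+\frac{\alpha}{2}\int_0^t\|u_M(\tau)\|_{A^{4+s}}\,d\tau\,\le\,\|u_0\|_{A^s},
\]
uniformly in $t$ and $M$. Your Fatou argument then concludes.

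Your Gronwall variant also closes, interpolating between $A^0$ and $A^4$ and between $A^s$ and $A^{4+s}$, with $h=\|u_M\|_{A^0}^{a}\|u_M\|_{A^4}\in L^1(\R_+)$. It works only once the terms come from \textbf{(A1)} rather than \textbf{(A2)}.
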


\begin{theorem}[Stability] \label{t:stabs}
Consider the Cauchy problem \eqref{eq:parab}, under assumptions {\rm \tbf{(A1)}}, {\rm \tbf{(A2)}}   and {\rm \tbf{(A3)}}
on the non-linearity $N$. Let $s>0$. Let $u_0$ and $v_0$ be two initial data in $A^s(\T^d)$, and let respectively $u$ and $v$ be two
corresponding solutions belonging to the space $X_{s,4+s}(T)$, for some time $T>0$.
Suppose that
\begin{equation}\label{eq:t0-stabs}
 \sup_{t\in[0,T]}\norm{v(t)}_{A^s}\,\lesssim\,\norm{v_0}_{A^s}\,,
\end{equation}
for a suitable (implicit) positive multiplicative constant, possibly depending on the time $T$.

Then, there exists $\veps_1>0$ such that, if $\norm{v_0}_{A^s}\leq\veps_1$, one has
\begin{align*}
\sup_{t\in[0,T]}\norm{u(t)\,-\,v(t)}_{A^s}\,+\,\int_{0}^{T}\norm{u(\tau)\,-\,v(\tau)}_{A^{4+s}}\, d \tau
\,&\lesssim\,\norm{u_0\,-\,v_0}_{A^s}\,,
\end{align*}
where the implicit multiplicative constant may depend on $\alpha$, $\veps_1$ and on the norms of the two solutions in $X_{s,4+s}(T)$.
\end{theorem}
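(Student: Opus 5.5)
We adapt the proof of Theorem \ref{t:stab} to the $A^s$ level. We take the difference $\de:=u-v$, which solves
\[
\pat\de\,+\,\al\,\D^2\de\,=\,N(t,x,u,\ldots,D^4u)\,-\,N(t,x,v,\ldots,D^4v).
\]
By \eqref{est:average-diff} its average is preserved. Multiplying identity \eqref{eq:pat} by $|k|^s$ and summing over $k$, we get
\[
\frac{d}{dt}\norm{\de(t)}_{A^s}\,+\,\al\,\norm{\de(t)}_{A^{4+s}}\,\le\,\norm{N(\cdot,u)-N(\cdot,v)}_{A^s}.
\]
As before, we write the difference of non-linearities through Taylor's formula as $\sum_{j=0}^4 \pa_{p_j}N(\ldots,\xi_j,\ldots)\,D^j\de$. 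The new ingredient is a product law in $A^s$. Using $|k|^s\le 2^{s}\big(|k-m|^s+|m|^s\big)$ we obtain
\[
\norm{fg}_{A^s}\,\lesssim\,\norm{f}_{A^s}\norm{g}_{A^0}+\norm{f}_{A^0}\norm{g}_{A^s}.
\]
Each term then splits into two pieces:
\[
\norm{\pa_{p_j}N}_{A^0}\,\norm{\de}_{A^{s+j}}\qquad\mbox{and}\qquad \norm{\pa_{p_j}N}_{A^s}\,\norm{\de}_{A^{j}}.
\]
We treat the first piece exactly as in Section \ref{s:proof-unique}, with \eqref{interpol2} between $A^s$ and $A^{4+s}$ replacing \eqref{interpol}. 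Since $\norm{v}_{A^0}\lesssim\norm{v}_{A^s}$ for mean-free $v$, the smallness of $\norm{v_0}_{A^s}$ together with \eqref{eq:t0-stabs} makes the coefficient of $\norm{\de}_{A^{4+s}}$ at most $\al/4$. The remaining contributions give $g(t)\norm{\de}_{A^s}$ with $g\in L^1(0,T)$, thanks to \eqref{eq:interpol_Xs}.

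The main obstacle is the second piece, $\norm{\pa_{p_j}N(t,x,\ldots)}_{A^s}$. Assumption \tbf{(A2)} is only a pointwise bound, so it does not control Wiener norms of positive order. We plan to proceed as follows.

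\textbf{Case $s\le1$.} Here we use the characterization of $A^s$ through differences/derivatives: morally, $\norm{F(w)}_{A^s}$ is controlled by $\norm{\nabla_{\mf p}F}\cdot\norm{w}_{A^s}$. This requires differentiating $N$ once more (or, for non-integer $s$, a fractional chain rule). We would impose the natural analogue of \tbf{(A2)} on second derivatives, with the same homogeneity. If such an extension is already implicit in what the paper means by "direct adaptation", we would rely on it.

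\textbf{Case $s>1$.} We iterate the argument, placing $D^{\lfloor s\rfloor}$ on the product by Leibniz/Faà di Bruno.

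In either case every resulting term is a product of one $\de$-factor and factors of $u,v$. The total derivative count equals $4+s$, and the homogeneity in $p_j$ is consistent with $4/j$. By \eqref{interpol2} and Hölder in time, each term is therefore bounded by
\[
\mbox{small}\times\norm{\de}_{A^{4+s}}\,+\,h(t)\,\norm{\de}_{A^s},\qquad h\in L^1(0,T),
\]
with $h$ depending on the $X_{s,4+s}(T)$ norms of $u$ and $v$. The factor $\norm{\de}_{A^j}$ with $j\le3$ is interpolated between $A^0$ and $A^{4+s}$, and hence between $A^s$ and $A^{4+s}$ via the mean-free embedding.

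Collecting terms, we arrive at
\[
\frac{d}{dt}\norm{\de}_{A^s}\,+\,\frac{\al}{2}\,\norm{\de}_{A^{4+s}}\,\lesssim\,G(t)\,\norm{\de}_{A^s},\qquad G\in L^1(0,T).
\]
Gronwall's lemma, followed by integration in time, then yields the claimed estimate, with a constant depending on $\al$, $\veps_1$ and the norms of $u,v$ in $X_{s,4+s}(T)$.
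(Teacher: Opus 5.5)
\emph{Verdict: there is a genuine gap.} The decisive estimate, $\|\partial_{p_j}N(t,x,\dots)\|_{A^s}$, is obtained only by adding hypotheses beyond \textbf{(A1)}--\textbf{(A3)}. Those added hypotheses would not justify it either.

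Your skeleton is presumably what the paper's omitted ``direct adaptation'' of Section~\ref{s:proof-unique} consists of: the $A^s$ analogue of \eqref{eq:diff}, Taylor expansion, the product law, interpolation via \eqref{interpol2} and \eqref{eq:interpol_Xs}, smallness only in front of $\|\delta\|_{A^{4+s}}$, and Gronwall. You also correctly single out the piece $\|\partial_{p_j}N(t,x,\dots)\|_{A^s}\,\|\delta\|_{A^j}$ as the difficulty. But you do not overcome it: you add hypotheses, so what you sketch is not a proof of the statement under \textbf{(A1)}--\textbf{(A3)}.

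The hypothesis you add would not make the step work anyway. Take $F:=\partial_{p_j}N$, independent of $x$, evaluated along a jet $w$, and $s=1$. The chain rule in Wiener spaces reads $\|F(w)\|_{A^1}\simeq\|\nabla_{\mathfrak p}F(w)\cdot\nabla w\|_{A^0}\le\|\nabla_{\mathfrak p}F(w)\|_{\ell^1}\,\|w\|_{A^1}$, and similarly, through differences, for $0<s<1$. It needs the full Fourier-$\ell^1$ norm of the \emph{composed} function $\nabla_{\mathfrak p}F(w)$. That is once more a Wiener-norm bound for a composition. Pointwise bounds on $\nabla^2_{\mathfrak p}N$, or on derivatives of any order, do not yield such bounds; recall that the functions operating on the Wiener algebra are exactly the real-analytic ones.

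There is a second problem. $N$ depends explicitly on $x$, and \textbf{(A1)}--\textbf{(A2)} control this dependence only through $\sigma_0,\sigma_1^{(j)}\in L^\infty(A^0)$. So the $x$-derivatives of $\partial_{p_j}N(t,x,\cdot)$ generated by any chain rule in $A^s$ are not controlled at all, and your extra hypothesis does not address them.

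Your objection also applies verbatim to the first piece. A pointwise inequality $|f|\le g$ gives no control of $\sum_k|\widehat f(k)|$. Bounding $\|\partial_{p_j}N(t,x,\dots)\|_{A^0}$ ``exactly as in Section~\ref{s:proof-unique}'' therefore relies on the very step you reject at order $s$. In addition, the intermediate points $\xi_j$ are not known to lie in any Wiener space, so Taylor's formula should be used in integral form.

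The ingredient missing at both levels is the same: a structure of $N$ that allows Wiener-algebra estimates of compositions. One example is $N$ given by a power series in $\mathfrak p$, with a majorant series converging at the $\ell^1$-size of $u$ and $v$, and with $(t,x)$-dependent coefficients in $L^\infty(A^s)$. This is the situation of all the models of Section~\ref{s:applic}. Under such a structure the argument would go as follows:
\begin{enumerate}
\item Expand $N(u)-N(v)$ monomial by monomial.
\item Apply your product law, with full $\ell^1$ norms (zero modes included) in place of the $A^0$ seminorms.
\item Then your derivative count, interpolation between $A^s$ and $A^{4+s}$, Young's inequality and Gronwall's lemma close the estimate. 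Smallness of $v$ is needed only in front of $D^4\delta$.
\end{enumerate}

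Finally, the mean-free embeddings you invoke for $\delta$ and $v$ require $\langle u_0\rangle=\langle v_0\rangle=0$. The statement leaves this implicit. Otherwise the $j=0$ term brings in $|\langle u_0-v_0\rangle|$, which $\|u_0-v_0\|_{A^s}$ does not see.
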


\subsection{Gain of analyticity} \label{ss:analyticity}

In the present subsection, we still consider problem \eqref{eq:parab} for a low regularity initial datum $u_0\in A^0(\T^d)$.
We show that the estimates of Section \ref{s:proof-exist} can be refined to obtain gain of analyticity of the solution
in finite time. Similar ideas have been used before (see for instance \cite{granerosema,LiuStrain} and the references therein).

In order to set up the problem and rigorously state the result, some preparation is in order.
Given real numbers $s\geq0$ and $\nu\geq0$, let us define the space
\begin{align*}
{{A}}_{\nu}^s(\TT^d)\, &:=\,
\left\{u\,\in\, L^1(\TT^d)\quad \Big|\quad  \|u\|_{A_{\nu}^s}\,:=\,\sum_{k\in\ZZ^d} e^{\nu|k|} |k|^s\;|\widehat{u}(k)|\,<\,+\,\infty\;\right\}\,.
\end{align*}
Distributions belonging to the space $A^0_\nu$ are thus analytic with respect to the space variable, with radius of analyticity equal to $\nu$.

Now, we consider the case in which $\nu=\nu(t)$ is a a time-dependent function from $\R_+$ into itself. Given a finite time $T>0$, we then define the space
\begin{equation*}\label{eq:Xs4anal}
X^{\nu(\cdot)}_{0,4}(T)\,:=\,\mc C\big([0,T];A_{\nu(\cdot)}^0(\TT^d)\big)\,\cap\, L^1\big([0,T];A_{\nu(\cdot)}^4(\TT^d)\big)\,,
\end{equation*}
where the writing $u\,\in\,X^{\nu(\cdot)}_{0,4}(T)$ means that, for a.e. $t\in[0,T]$, one has $u(t)\in A_{\nu(t)}^0(\T^d) \cap A_{\nu(t)}^4(\T^d)$, together
with the property that the maps
\[
 t\,\mapsto\,\left\|u(t)\right\|_{A_{\nu(t)}^0}\qquad \mbox{ and }\qquad t\,\mapsto\,\left\|u(t)\right\|_{A_{\nu(t)}^4}
\]
are, respectively, continuous and $L^1$ over $[0,T]$.
Of course, here above no intersection $A_{\nu(t)}^0(\T^d) \cap A_{\nu(t)}^4(\T^d)$ is needed in the case $\lan u(t)\ran=0$, and one can simply write
$u(t)\in A_{\nu(t)}^4(\T^d)$ for a.e. $t\in[0,T]$.
The definition easily extends, in a natural way, to the case $T=+\infty$, in which case we will denote the space by simply the symbol
$X^{\nu(\cdot)}_{0,4}$.

\medbreak
After these preliminaries, one can repeat the computations of Section \ref{s:proof-exist} and obtain, in place of \eqref{est:u_M}, the estimate
\begin{align*}
\frac{d}{dt}\norm{u_M(t)}_{A_{\nu(t)}^0}\,+\,\al\, \norm{u_M(t)}_{A_{\nu(t)}^4}\,&\leq\,C_0\,\norm{\si_0}_{L^\infty(A^0)}\,
\left(\norm{u_M(t)}_{A_{\nu(t)}^0}^{{\ell}^*}\,+\,\norm{u_M(t)}_{A_{\nu(t)}^0}^{\ell_*}\right)\,\norm{u_M(t)}_{A_{\nu(t)}^4} \\
&\qquad +\,\nu'(t)\,\norm{u_M(t)}_{A_{\nu(t)}^1}\,.
\end{align*}
We now take $\nu(t)\,=\,\beta\, t$, with $\beta$ sufficiently small, to find
\begin{align*}
\frac{d}{dt}\norm{u_M(t)}_{A_{\nu(t)}^0}\,+\,\al\, \norm{u_M(t)}_{A_{\nu(t)}^4}\,&\leq\,C_0\,\norm{\si_0}_{L^\infty(A^0)}\,
\left(\norm{u_M(t)}_{A_{\nu(t)}^0}^{{\ell}^*}\,+\,\norm{u_M(t)}_{A_{\nu(t)}^0}^{\ell_*}\right)\,\norm{u_M(t)}_{A_{\nu(t)}^4}\\
&\qquad\,+\,\beta\norm{u_M(t)}_{A_{\nu(t)}^4}\,,
\end{align*}
from which we get
\begin{align*}
&\frac{d}{dt}\norm{u_M(t)}_{A_{\nu(t)}^0}\,+\,(\al-\beta)\, \norm{u_M(t)}_{A_{\nu(t)}^4} \\
&\qquad\qquad\qquad\qquad\qquad
\leq\,C_0\,\norm{\si_0}_{L^\infty(A^0)}\,\left(\norm{u_M(t)}_{A_{\nu(t)}^0}^{{\ell}^*}\,+\,\norm{u_M(t)}_{A_{\nu(t)}^0}^{\ell_*}\right)\,
\norm{u_M(t)}_{A_{\nu(t)}^4}\,.
\end{align*}
At this point, we can conclude the result as in the previous sections.

In the end, one can state the following result. As its proof is based on rather direct adaptations of the arguments developed in
Sections \ref{s:proof-unique} and \ref{s:proof-exist}, for the sake of brevity we omit its proof.

\begin{theorem}[Gain of analyticity] \label{teo:analyticity}
For a given $\alpha>0$, consider the Cauchy problem \eqref{eq:parab} under assumptions {\rm \tbf{(A1)}}, {\rm \tbf{(A2)}} and {\rm \tbf{(A3)}}
on the non-linearity $N$. Let $0<\beta<\alpha$ and define, for $t\geq0$, the function $\nu(t)\,:=\,\beta\,t$.

Then, there exists $\veps_0>0$ such that, for any $u_0\in A^0(\T^d)$ satisfying $\lan u_0\ran =0$ and $\left\|u_0\right\|_{A^0}\,\leq\,\veps_0$,
the unique mean-free solution $u\in X_{0,4}$ to the Cauchy problem \eqref{eq:parab}, given by Theorem \ref{teo:wp}, satisfies in addition the property
$u\in X_{0,4}^{\nu(\cdot)}$.
In particular, for any time $t>0$ fixed, the solution $u(t)$ at that time is analytic in the space variable.
Finally, there exists a constant $C\,=\,C(\alpha,\beta,\veps_0)\,>\,0$, only depending on the quantities
inside the brackets, such that
\begin{equation*}
 \|u\|_{X_{0,4}^{\nu(\cdot)}}\,\leq\,C\,\left\|u_0\right\|_{A^0}\,.
\end{equation*}
\end{theorem}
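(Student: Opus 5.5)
We work at the level of the Galerkin approximations $u_M$ of Proposition \ref{prop:prelim}, derive uniform bounds in the weighted spaces $A^0_{\nu(t)}$, $A^4_{\nu(t)}$ with $\nu(t)=\beta t$, and pass to the limit. Since $u_M=\P_Mu_M$ has finitely many modes, $\|u_M(t)\|_{A^s_{\nu(t)}}$ is finite and Lipschitz in $t$. The computation is therefore rigorous at the approximate level.

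\emph{Step 1: the differential inequality.} Multiply \eqref{eq:pat} by $e^{\beta t|k|}$ and sum over $k$. This gives
\[
\frac{d}{dt}\|u_M\|_{A^0_{\nu}}+\alpha\|u_M\|_{A^4_\nu}\le \beta\|u_M\|_{A^1_\nu}+\|N(t,x,u_M,\ldots,D^4u_M)\|_{A^0_\nu}.
\]
To estimate the non-linear term we need an algebra property of $A^0_\nu$, which follows from $e^{\nu|k|}\le e^{\nu|k-j|}e^{\nu|j|}$. We also need the interpolation \eqref{interpol} with weights, which holds by Hölder in $k$ because the weight is common to both factors.

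The main obstacle is that \textbf{(A1)} only gives a pointwise bound on $|N|$, not a Fourier bound in $A^0_\nu$. The argument of Section \ref{s:proof-exist} implicitly treats $N$ as a product structure and does not explain how $|N|\le\sigma_0\cdots$ transfers to $A^0$ norms. For the weighted version I would make explicit the standing interpretation that $N$ is (dominated termwise by) an analytic/polynomial expression in $\mathfrak p$. The example in Subsection \ref{ss:assumpt} is of this kind: a sum of monomials $p_0^{\ell}p_ip_jp_sp_q$ with coefficients $\sigma_0$. For such $N$ the algebra property gives $\|N\|_{A^0_\nu}$ bounded by the same right-hand side as in \eqref{est:u_M}, with all norms replaced by weighted ones. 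One caveat is that $\sigma_0$ must then be measured in $A^0_{\nu}$; for $t\le T$ this is at worst a factor $e^{\beta T\cdot}$. I would either assume $\sigma_0\in L^\infty(A^0_{\nu})$ or take $N$ independent of $x$, where the issue disappears.

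\emph{Step 2: absorbing the analyticity term.} Since $|k|\le 1+|k|^4$, for mean-free functions $|k|\le|k|^4$ when $k\neq0$. Hence $\beta\|u_M\|_{A^1_\nu}\le\beta\|u_M\|_{A^4_\nu}$, and the inequality becomes the one with $\alpha-\beta>0$ on the left, as displayed before the theorem.

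\emph{Step 3: bootstrap.} Define $\veps_0$ as in the proof of Proposition \ref{prop:prelim} with $\alpha$ replaced by $\alpha-\beta$. Run the same continuity argument on $T^*=\sup\{t:\|u_M(t)\|_{A^0_{\nu(t)}}\le2\veps_0\}$; at $t=0$ we have $\|u_M(0)\|_{A^0_0}=\|\P_Mu_0\|_{A^0}\le\veps_0$. This yields
\[
\|u_M(t)\|_{A^0_{\nu(t)}}+\frac{\alpha-\beta}{2}\int_0^t\|u_M\|_{A^4_{\nu(\tau)}}d\tau\le\|u_0\|_{A^0},
\]
uniformly in $M$ and $t$.

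\emph{Step 4: passage to the limit.} By Subsection \ref{ss:converg}, $u_M\to u$ in $X_{0,4}(T)$, so $\widehat{u_M}(t,k)\to\widehat u(t,k)$ for each $k$. Fatou's lemma in $k$ and in $t$ then transfers the uniform bound to $u$, giving $u\in L^\infty(A^0_{\nu(\cdot)})\cap L^1(A^4_{\nu(\cdot)})$ with $\|u\|_{X^{\nu(\cdot)}_{0,4}}\le C\|u_0\|_{A^0}$. For continuity of $t\mapsto\|u(t)\|_{A^0_{\nu(t)}}$, fix $t_0>0$ and use the bound at a slightly smaller weight $\nu(t)-\eta$, which dominates the tails uniformly near $t_0$. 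At $t=0$ the weight is $1$, so continuity there follows from $u\in\mathcal C(A^0)$ plus uniform smallness of the high-frequency tails. Finally, $u(t)\in A^0_{\beta t}$ means $\widehat u(t,k)$ decays like $e^{-\beta t|k|}$, so $u(t)$ is analytic in a strip of width $\beta t$.
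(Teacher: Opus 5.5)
Your argument is the paper's: you redo the Galerkin estimate of Section~\ref{s:proof-exist} with the weights $e^{\beta t|k|}$, absorb the extra term $\beta\|u_M\|_{A^1_{\nu(t)}}\le\beta\|u_M\|_{A^4_{\nu(t)}}$ (using the mean-free property) so that $\alpha-\beta$ remains on the left, and run the same bootstrap; your caveats are well founded, because the paper silently keeps $\|\sigma_0\|_{L^\infty(A^0)}$ in the weighted estimate, and no finite factor (in particular not $e^{\beta T}$) bounds $\|\sigma_0\|_{A^0_{\beta t}}$ by $\|\sigma_0\|_{A^0}$, so analyticity or $x$-independence of the coefficients is genuinely needed rather than merely convenient. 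One small fix is needed in Step~4: to get continuity of $t\mapsto\|u(t)\|_{A^0_{\nu(t)}}$ at $t_0>0$ you need a bound at a \emph{larger} weight, not a smaller one, which you obtain by running Steps~1--3 with some $\beta'\in(\beta,\alpha)$ (shrinking $\varepsilon_0$), so that $\sum_{|k|>K}e^{\beta t|k|}|\widehat u(t,k)|\le e^{-(\beta'-\beta)tK}\|u_0\|_{A^0}$ uniformly for $t$ near $t_0$.
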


\subsection{The case of higher-order operators}\label{sec:2m}

Here we show how the techniques developed in the previous sections can be adapted to handle the case of higher-order operators.

We  generalise the fourth-order problem \eqref{eq:parab} as
\begin{equation}\label{eq:parab-m}
\begin{cases}
\begin{array}{ll}
\ds\pat u  \, +\,\al\,(-\D)^ku\,
=\,N(t,x,u,Du,\ldots,D^{2k} u) &\qquad  \mbox{ in }\q  \R_+\!\times\,\TT^d\,, \\
\ds u(0,x)=u_0(x) &\qquad \mbox{ in }\q\, \TT^d\,,
\end{array}
\end{cases}
\end{equation}
where $\al >0$ and $k\in\NN\setminus\{0\}$.

Of course, we also need to replace assumptions \tbf{(A1)} and \tbf{(A2)} on $N$, as specified below.
In the following, we are going to use the shortcut $\mf p_n := (p_0,p_1,\ldots ,p_n)$.

\medbreak
\tbf{(A1.k)} 
The map $(t,x,\mf p_{2k})\,\mapsto\,N(t,x,\mf p_{2k})$ is continuous and satisfies the following domination condition:
there exist some exponents $\ell_1,\ldots,\ell_{2k-1}\in\NN$ and $\ell_{2k}>0$ such that
\begin{align}
\label{eq:disNm}
&\forall\,(t,x,\mf p_{2k})\in \R_+\!\times\,\T^d\times\R^{\mf D}\,,
\qquad  \left|N(t,x,\mf p_{2k})\right|\,\leq\,\s_0(t,x)\,
\sum_{i=1}^{2k}\av{p_0}^{\ell_i}\av{p_i}^{2k/i}\,,
\end{align}
where the function $\s_0(t,x)$ satisfies
\[
 \s_0\,\in\,L^\infty\big(\!\R_+;A^0(\T^d)\big)\,.
\]

\tbf{(A2.k)} The function $N$ is \emph{locally Lipschitz continuous} with respect to the $\mf p_{2k}$ variable; in addition, there exist
exponents $m_0,\ldots,m_{2k-1}\in\NN$  and $m_{2k}>0$ such that
\begin{equation} \label{hyp:Lip-Nm}
\begin{aligned}
&\forall\,(t,x,\mf p_{2k})\in \R_+\!\times\,\T^d\times\R^{\mf D}\,, \\
&\quad
\left|\nabla_{p_j}N(t,x,\mf p_{2k})\right|\,\leq\,\s^{(j)}_1(t,x)\,\left|p_0\right|^{m_j}\,
\left(1\,+\,\sum_{i=1}^{2k}\left|p_i\right|^{(2k-j)/i}\right) \quad \mbox{ for }\q 0\le j\le 2k\,,
\end{aligned}
\end{equation}
for some functions $\s_1^{(j)}(t,x)$ satisfying
\[
\forall\,j\in\big\{0,1,\ldots, 2k\big\}\,,\qquad\qquad \s^{(j)}_1\,\in\,L^\infty\big(\!\R_+;A^0(\T^d)\big)\,.
\]


We are interested in obtaining well-posedness results for the Cauchy problem \eqref{eq:parab-m} in the space
\[
X_{0,2k}\,:=\,\mc C_b\big(\!\R_+;A^0(\T^d)\big)\,\cap\,L^1\big(\!\R_+;A^{2k}(\T^d)\big)\,.
\]
In this setting, Theorems \ref{teo:wp} and \ref{t:stab} can be generalised in the following way. Again, we omit the details of the proofs, as they are based on rather
straightforward adaptations of the arguments of Sections \ref{s:proof-unique} and \ref{s:proof-exist}.

\begin{theorem}[Well-posedness]\label{teo:wp-m}
Let $k\in\NN\setminus\{0\}$. Consider the Cauchy problem \eqref{eq:parab-m}, under assumptions {\rm \tbf{(A1.k)}}, {\rm \tbf{(A2.k)}} and {\rm \tbf{(A3)}} on the non-linearity $N$.

Then, there exists $\veps_0>0$ such that, for any $u_0\in A^0(\T^d)$ satisfying $\lan u_0\ran=0$ and $\left\|u_0\right\|_{A^0}\,\leq\,\veps_0$,
there exists a unique solution $u\in X_{0,2k}$ to \eqref{eq:parab-m} related to this initial datum.
In addition, one has $\lan u(t)\ran=0$ for all $t\geq0$, and there exists a constant $C\,=\,C(\alpha,\veps_0)\,>\,0$ only depending on the quantities inside the brackets, such that
\begin{equation*} \label{est:solutionm}
 \|u\|_{X_{0,2k}}\,\leq\,C\,\left\|u_0\right\|_{A^0}\,.
\end{equation*}
\end{theorem}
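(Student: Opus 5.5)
The plan is to transpose the two-step scheme of Sections \ref{s:proof-unique} and \ref{s:proof-exist}: first a stability estimate in $A^0$ for two solutions in $X_{0,2k}(T)$, then existence by Galerkin approximation, with convergence obtained from that same stability estimate. The key ingredient is again the interpolation inequality \eqref{interpol}. For $u\in X_{0,2k}(T)$ it gives
\[
\|u\|_{A^i}\le\|u\|_{A^0}^{1-i/(2k)}\,\|u\|_{A^{2k}}^{i/(2k)}\qquad\mbox{for } 1\le i\le 2k\,,
\]
so that $u\in L^{2k/i}\big([0,T];A^i\big)$. These are exactly the exponents appearing in \eqref{eq:disNm} and \eqref{hyp:Lip-Nm}. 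As a consequence, $N(t,x,u,\ldots,D^{2k}u)\in L^1([0,T];A^0)$, and the weak formulation and the continuity in time go through as in the fourth-order case. Assumption \textbf{(A3)} again preserves the mean value, so we may restrict to mean-free data and solutions, for which $A^0\hookrightarrow\mathcal C$.

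For the stability step, set $\delta=u-v$, use \eqref{eq:pat} to obtain
\[
\frac{d}{dt}\|\delta\|_{A^0}+\alpha\|\delta\|_{A^{2k}}\le\|N(u)-N(v)\|_{A^0}\,,
\]
and expand the difference by Taylor's formula in the variables $p_0,\ldots,p_{2k}$ successively, exactly as in \eqref{eq:stima-paN}. The Wiener algebra property bounds each $\|\nabla_{p_j}N(\cdots)\|_{A^0}$ through \eqref{hyp:Lip-Nm}. The terms to handle are then:
\begin{itemize}
\item $j=0$: the coefficient is integrable in time, since $\|p_i\|_{A^0}^{2k/i}\lesssim\|u\|_{A^i}^{2k/i}\in L^1$.
\item $1\le j\le 2k-1$: the coefficient involves $\|v\|_{A^0}^{m_j}$ and the powers $\|\cdot\|_{A^i}^{(2k-j)/i}$, and it multiplies $\|\delta\|_{A^j}\le\|\delta\|_{A^0}^{1-j/(2k)}\|\delta\|_{A^{2k}}^{j/(2k)}$. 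Young's inequality with exponents $2k/j$ and $2k/(2k-j)$ produces a small multiple $\|v_0\|_{A^0}^{2km_j/j}\|\delta\|_{A^{2k}}$, plus an $L^1_t$ coefficient times $\|\delta\|_{A^0}$. The integrability holds because $\big(\|u\|_{A^i}^{(2k-j)/i}\big)^{2k/(2k-j)}=\|u\|_{A^i}^{2k/i}$.
\item $j=2k$: the coefficient is bounded by $\|\sigma_1^{(2k)}\|\,\|v_0\|_{A^0}^{m_{2k}}$.
\end{itemize}
Choosing $\varepsilon_1$ small absorbs all the $\|\delta\|_{A^{2k}}$ terms into $\alpha/2$. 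Grönwall's lemma then yields the analogue of \eqref{eq:dis-stab0}, and hence uniqueness.

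For existence, consider the truncated problem with $\mathbb P_M$ as in \eqref{eq:parab-appr}, with $\Delta^2$ replaced by $(-\Delta)^k$. The Picard--Lindelöf argument in $A^0_0$ is unchanged, the Lipschitz constant now involving $M^{2k}$. For the a priori bound, interpolation gives
\[
\big\||u|^{\ell_i}|D^iu|^{2k/i}\big\|_{A^0}\le\|u\|_{A^0}^{\ell_i+1}\|u\|_{A^{2k}}\,,
\]
which leads to the analogue of \eqref{est:u_M} with $\ell_*=\min\{\ell_1+1,\ldots,\ell_{2k-1}+1,\ell_{2k}\}>0$. The same bootstrap with the time $T^*$ and the same choice of $\varepsilon_0$ gives global existence of $u_M$ and the uniform bound $\|u_M\|_{X_{0,2k}}\le\|u_0\|_{A^0}$. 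The Cauchy property in $X_{0,2k}(T)$ then follows from the stability estimate. Passing to the limit in the weak formulation, and using the time-independent uniform bound to go from $\bigcap_T X_{0,2k}(T)$ to $X_{0,2k}$, concludes the proof.

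The only step needing care is the Young's inequality bookkeeping for the intermediate indices $1\le j\le 2k-1$, i.e.\ checking that the exponents in \textbf{(A2.k)} are precisely the ones for which the coefficients of $\|\delta\|_{A^0}$ land in $L^1_t$; the computation above confirms this. Everything else is notational.
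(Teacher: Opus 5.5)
Your proposal is exactly the proof the paper intends: the paper omits the details and cites a straightforward adaptation of Sections~\ref{s:proof-unique} and~\ref{s:proof-exist}, and your exponent bookkeeping (interpolation giving $u\in L^{2k/i}([0,T];A^i)$, Young's inequality with exponents $2k/j$ and $2k/(2k-j)$) is the right one. Three details need tightening, and the paper's fourth-order write-up passes over all of them in the same way: (i) interpolation actually gives $\norm{|u|^{\ell_i}|D^iu|^{2k/i}}_{A^0}\le\norm{u}_{A^0}^{\ell_i+2k/i-1}\norm{u}_{A^{2k}}$, not exponent $\ell_i+1$, which is harmless since $2k/i-1>0$ for $i<2k$; (ii) for indices $1\le j\le 2k-1$ with $m_j=0$ (as in the paper's applications), the $\norm{\de}_{A^{2k}}$ term must be absorbed by Young's inequality with a small parameter, not by smallness of $v_0$; and (iii), more substantially, $u_M$ and $u_L$ solve differently truncated equations, so $u_M-u_L$ is driven by the extra source $(\P_M-\P_L)N(t,x,u_L,\ldots,D^{2k}u_L)$, and the Cauchy property additionally requires $\int_0^T\norm{(\mathrm{Id}-\P_L)N(t,x,u_L,\ldots,D^{2k}u_L)}_{A^0}\,dt\to0$ as $L\to\infty$ (or instead a comparison of genuine solutions of \eqref{eq:parab-m} with data $\P_Mu_0$), which the stability estimate alone does not provide.
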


\begin{theorem}[Stability]\label{teo:stabm}
Let $k\in\NN\setminus\{0\}$. Consider the Cauchy problem \eqref{eq:parab-m}, under assumptions {\rm \tbf{(A1.k)}}, {\rm \tbf{(A2.k)}} and  {\rm \tbf{(A3)}}
on the non-linearity $N$. Let $u_0$ and $v_0$ be two initial data in $A^0(\T^d)$, and let respectively $u$ and $v$ be two
corresponding solutions belonging to the space $X_{0,2k}(T)$, for some time $T>0$.
Suppose that
\begin{equation}\label{eq:t0-stabm}
\sup_{t\in[0,T]}\norm{v(t)}_{A^0}\,\lesssim\,\norm{v_0}_{A^0}\,,
\end{equation}
for a suitable (implicit) positive multiplicative constant, possibly depending on the time $T$.

Then, there exists $\veps_1>0$ such that if $\norm{v_0}_{A^0}\leq\veps_1$, one has
\begin{align*}
\sup_{t\in[0,T]}\norm{u(t)\,-\,v(t)}_{A^0}\,+\,\int_{0}^{T}\norm{u(\tau)\,-\,v(\tau)}_{A^{2k}}\, d \tau
\,&\lesssim\,\norm{u_0\,-\,v_0}_{A^0}\,, 
\end{align*}
where the implicit multiplicative constant may depend on $\alpha$, $\delta_1$ and on the norms of $u$ and $v$ in the space
$X_{0,2k}(T)$.
\end{theorem}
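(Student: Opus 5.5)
We follow the proof of Theorem \ref{t:stab} almost verbatim, replacing the order $4$ by $2k$ throughout. Set $\de:=u-v$, which solves
\[
\pat\de+\al(-\D)^k\de=N(t,x,u,\ldots,D^{2k}u)-N(t,x,v,\ldots,D^{2k}v),
\]
with datum $u_0-v_0$. By \tbf{(A3)}, $\lan\de(t)\ran=\lan u_0-v_0\ran$ for all $t$, so the low frequency is preserved. We then use formula \eqref{eq:pat} mode by mode: since the Fourier multiplier of $(-\D)^k$ is $|k'|^{2k}$, summing over modes gives
\[
\frac{d}{dt}\norm{\de}_{A^0}+\al\norm{\de}_{A^{2k}}\le\norm{N(u)-N(v)}_{A^0}.
\]
Next, we write the difference of nonlinearities by the telescoping Taylor formula in the variables $p_0,\ldots,p_{2k}$. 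Each intermediate point $\xi_j$ lies on the segment between $D^ju$ and $D^jv$. In the $j$-th term, the $p_i$ with $i<j$ are evaluated on $v$ and those with $i>j$ on $u$. We then use the algebra property of $A^0$ and the bound $\norm{\xi_j}_{A^0}\le\norm{D^ju}_{A^0}+\norm{D^jv}_{A^0}$.

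The key step is to bound each term $\norm{\pa_{p_j}N(\cdots)}_{A^0}\norm{\de}_{A^j}$ using \tbf{(A2.k)}. By the interpolation \eqref{interpol}, $\norm{w}_{A^i}\le\norm{w}_{A^0}^{1-i/2k}\norm{w}_{A^{2k}}^{i/2k}$. This gives $\norm{w}_{A^i}^{(2k-j)/i}\lesssim \norm{w}_{A^0}^{\cdots}\norm{w}_{A^{2k}}^{(2k-j)/2k}$, so the factor $1+\sum_i\norm{\cdot}_{A^i}^{(2k-j)/i}$ lies in $L^{2k/(2k-j)}_t$.

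Multiplying by $\norm{\de}_{A^j}\le\norm{\de}_{A^0}^{1-j/2k}\norm{\de}_{A^{2k}}^{j/2k}$, we apply Young's inequality with exponents $2k/j$ and $2k/(2k-j)$ for $1\le j\le 2k-1$. The result is
\[
\norm{v_0}_{A^0}^{2k m_j/j}\norm{\de}_{A^{2k}}+\norm{\si_1^{(j)}}^{2k/(2k-j)}_{L^\infty(A^0)}\Big(1+\sum_i\norm{\cdot}_{A^i}^{2k/i}\Big)\norm{\de}_{A^0}.
\]
Here we have used $\norm{v}_{A^0}\lesssim\norm{v_0}_{A^0}$ from \eqref{eq:t0-stabm} on the $|p_0|^{m_j}$ factor, which is evaluated at $v$ when $j\ge1$. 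The second factor is in $L^1([0,T])$, by the interpolation analogue of \eqref{eq:interpol_X0}: $u\in L^{2k/i}_T(A^i)$.

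Two terms are handled separately. For $j=0$, no Young is needed; the factor is directly $L^1_T$ and multiplies $\norm{\de}_{A^0}$. For $j=2k$, the term is $\norm{\si_1^{(2k)}}\norm{v_0}_{A^0}^{m_{2k}}\norm{\de}_{A^{2k}}$, which is where $m_{2k}>0$ is crucial.

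This yields
\[
\norm{N(u)-N(v)}_{A^0}\lesssim\o_0\norm{\de}_{A^{2k}}+g(t)\norm{\de}_{A^0},\qquad g\in L^1([0,T]),
\]
where $\o_0$ is a maximum of positive powers of $\norm{v_0}_{A^0}$. Choosing $\veps_1$ small so that $C\o_0\le\al/2$, we absorb the first term into the left-hand side. Grönwall's lemma then gives
\[
\sup_{[0,T]}\norm{\de}_{A^0}+\frac\al2\int_0^T\norm{\de}_{A^{2k}}\le e^{C\int_0^Tg}\,\norm{u_0-v_0}_{A^0}.
\]

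The main obstacle is the bookkeeping of exponents in the Young step. We must check that the bound has the right homogeneity, namely that $\frac{2k-j}{i}\cdot\frac{i}{2k}+\frac j{2k}=1$, so that the total power of $A^{2k}$-type norms is exactly one and $g$ is genuinely in $L^1_T$. We must also check that all smallness comes through $v$ rather than $u$, which is why the ordering of the Taylor expansion, with $v$ in the lower-order slots, matters.
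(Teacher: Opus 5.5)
Your proposal is correct and is essentially the paper's argument: the paper omits this proof and refers to a direct adaptation of the proof of Theorem~\ref{t:stab}, which is exactly what you carry out. That is, the telescoping Taylor expansion with $v$ in the lower-order slots, interpolation plus Young with exponents $2k/j$ and $2k/(2k-j)$ (your exponent bookkeeping reproduces the paper's for $k=2$), absorption of the $A^{2k}$ term by smallness, and Gr\"onwall.

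One small correction, which applies equally to the paper's definition \eqref{eq:om0}. Since $m_j\in\NN$ may vanish for $1\le j\le 2k-1$ (it does in the applications of Section~\ref{s:applic}), the factor $\|v_0\|_{A^0}^{2km_j/j}$ need not be small, so $\o_0$ is not always ``a maximum of positive powers''. The fix is to use Young's inequality with a small parameter $\eta$ for those terms and put the resulting $C_\eta$ into $g$. Smallness of $\|v_0\|_{A^0}$ is then genuinely needed only for the $j=2k$ term, which is exactly where you invoke $m_{2k}>0$.
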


\section{Applications} \label{s:applic}

In this last part, we show how certain models appearing in the literature (see \tsl{e.g.} \citep {granero2018global,magliocca2024fourth,granero2024global})
enter the framework developed within the present work. In particular, Theorems \ref{teo:wp} and \ref{t:stab} apply to those models, thus filling
the existing gap between the functional settings where existence and uniqueness hold.

To begin with, let us recall hypotheses \tbf{(A1)}, \tbf{(A2)} and \tbf{(A3)} for the reader's convenience.

\medbreak
\tbf{(A1)}
There exist some exponents $\ell_1,\ell_2,\ell_3\in\NN$ and $\ell_4>0$ such that
\begin{align}
\label{eq:disN-appl}
\forall\,(t,x,\mf p_4)\in \R_+\!\times\,\T^d\times\R^{\mf D}\,,
\qquad\qquad \left|N(t,x,\mf p_4)\right|\,\leq\,\s_0(t,x)
\sum_{j=1}^{4}\av{p_0}^{\ell_j}\av{p_j}^{4/j}\,,
\end{align}
with $\s_0\,\in\,L^\infty\big(\!\R_+;A^0(\T^d)\big)$.

\medbreak
\tbf{(A2)}  There exist
exponents $m_0,m_1,m_2,m_3\in\NN$ and $m_4>0$ such that
\begin{equation} \label{hyp:Lip-N-apll}
\begin{aligned}
&\forall\,(t,x,\mf p_4)\in \R_+\!\times\,\T^d\times\R^{\mf D}\,, \\
&\quad
\left|\nabla_{p_j}N(t,x,\mf p_4)\right|\,\leq\,\s^{(j)}_1(t,x)\,\left|p_0\right|^{m_j}\,
\left(1\,+\,\sum_{k=1}^4\left|p_k\right|^{(4-j)/k}\right) \quad \mbox{ for }\q 0\le j\le 4\,,
\end{aligned}
\end{equation}
for suitable functions $\s^{(j)}_1\,\in\,L^\infty\big(\!\R_+;A^0(\T^d)\big)$, for all $j\in\big\{0,1,\ldots, 4\big\}$.

\medbreak
\tbf{(A3)} There exists a function $H\,=\, H(t,x,p_0,p_1,p_2,p_3)$ such that, for any $(t,x)\in\R_+\!\times\,\T^d$ and any
$u\in L^\infty\big(\!\R_+\!\times\,\T^d\big)\cap L^1\big(\!\R_+;W^{4,\infty}(\T^d)\big)$, one has
\[
N(t,x,u,Du,\ldots,D^4u)\,=\,\div H(t,x,u,Du,D^2u,D^3u)\,,
\]
where the divergence operator acts only with respect to the space variable $x\in\T^d$.

\subsection{The models from \citep{granero2018global}}
The authors of \citep{granero2018global} deal  with several models for describing the evolution of crystal surfaces, which were proposed in \citep{krug1995adatom} and \citep{shehadeh2011evolution}. The main ones read as follows:
\begin{equation}\label{eq:18-1}
\begin{cases}
\begin{array}{ll}
\ds\pat v\,=\,\D e^{-\D v} & \qquad \mbox{ in }\q  \R_+\!\times\,\TT^d\,, \\
\ds v\bigl|_{t=0}\,=\,v_0 &\qquad \mbox{ in }\q\, \TT^d\,,
\end{array}
\end{cases}
\qquad
\t{with}\q
\int_{\TT^d} v_0(x)\,dx=0,
\end{equation}
and
\begin{equation}\label{eq:18-2}
\begin{cases}
\begin{array}{ll}
\ds\pat v\,=\,-v^2\,\D^2(v^3) &\qquad \mbox{ in }\q  \R_+\!\times\,\TT^d\,,  \\
\ds v\bigl|_{t=0}\,=\,v_0 &\qquad \mbox{ in }\q\, \TT^d\,,
\end{array}
\end{cases}
\qquad
\t{with}\q
v_0> 0.
\end{equation}
In each case, a suitable change of variable $u=f(v)$ is needed. The existence of solutions verifying
\[
u\in L^{\infty}(\R_+;L^\infty(\TT^d))\cap \m{M}(\R_+;W^{4,\infty}(\TT^d)),
\]
and $\norm{u(t)}_{L^\infty}\lesssim\norm{u_0}_{A^0}$ assuming $\norm{u_0}_{A^0}<1$ is proved, respectively, in \citep[Theorems 2.2 and 2.6]{granero2018global}.

\subsubsection{Verifying {\rm\tbf{(A1)}}, {\rm\tbf{(A2)}} and {\rm\tbf{(A3)}} for \eqref{eq:18-1}}

Using the change of variable $u=\D v$, we get the following equivalent formulation of \eqref{eq:18-1}:
\[
\begin{cases}
\begin{array}{ll}
\ds\pat u\,=\,\D^2e^{-u} &\qquad \mbox{ in }\q  \R_+\!\times\,\TT^d\,, \\
\ds u\bigl|_{t=0}\,=\,\D v_0 &\qquad \mbox{ in }\q\, \TT^d\,.
\end{array}
\end{cases}
\]
Next, we rewrite the right-hand side as
\[
\D^2e^{-u}\,=\,\D^2\pare{\sum_{r\ge0}(-1)^r\frac{u^r}{r!}}\,=\,-\,\D^2u\,+\,\D^2\pare{\sum_{r\ge2}(-1)^r\frac{u^r}{r!}}\,.
\]
Hence, in this case, we have $\al=1$ in \eqref{eq:parab} and the non-linearity satisfies assumption \tbf{(A3)}. Moreover, the non-linearity is of the form
\begin{align*}
 N_1(\mf p_4) := \sum_{s=1}^{4}\sum_{r\ge2}(-1)^r\tn_{s}^r(\mf p_4) ,
\end{align*}
where
\begin{align*}
\tn_{1}^r(\mf p_4) &:=\frac{1}{(r-1)!}p_0^{r-1}p_4,
&
\tn_{2}^r (\mf p_4)&:=\frac{1}{(r-2)!}p_0^{r-2}\left[ p_2^2+ p_3p_1\right],
\\
\tn_{3}^r (\mf p_4)&:=\frac{1}{(r-3)!}p_0^{r-3}p_2p_1^2 ,
&
\tn_{4}^r (\mf p_4)&:=\frac{1}{(r-4)!}p_0^{r-4}p_1^4.
\end{align*}
Using that
\begin{align*}
\sum_{r\ge2} \frac{w^{r-1}}{(r-1)!}
=e^w-1\approx w  \qquad\text{and}\qquad
\sum_{r\ge j} \frac{w^{r-j}}{(r-j)!}
=e^w,
\end{align*}
we estimate
\begin{align*}
 \av{ N_1(\mf p_4)}
&\lesssim \av{p_0}\av{p_4}+e^{\av{p_0}} \pare{ |p_2|^2+ \av{p_3}\av{p_1}+\av{p_2}\av{p_1}^2 +\av{p_1}^4}\\
&\lesssim \sigma_0(t,x) \pare{\av{p_0}\av{p_4}+ |p_2|^2+ \av{p_3}^{4/3} +\av{p_1}^4},
\end{align*}
with $\sigma_0(t,x)\simeq 1+e^{\av{u(t,x)}}$. Hence, we have proved that $N_1$ verifies \tbf{(A1)} with $\ell_1=\ell_2=\ell_3=0$ and $\ell_4=1$.\\

We now differentiate the function $N_1$ and obtain the following estimates:
\begin{align*}
\av{\pa_{p_0} N_1(\mf p_4)}&\lesssim\sum_{r\ge2} \biggl[  \frac{1}{(r-2)!}\av{p_0}^{r-2}\av{p_4}+\frac{1}{(r-3)!}\av{p_0}^{r-3}\left[\av{p_2}^2+\av{p_3}\av{p_1}\right]\\
&\q +\frac{1}{(r-4)!}\av{p_0}^{r-4}\av{p_2}\av{p_1}^2+\frac{1}{(r-5)!}\av{p_0}^{r-5}\av{p_1}^4\biggr]\\
&=e^{\av{p_0}}\bra{\av{p_4}+\av{p_2}^2+\av{p_3}\av{p_1}+\av{p_2}\av{p_1}^2+\av{p_1}^4}\\
&\lesssim e^{\av{p_0}}\bra{\av{p_4}+\av{p_2}^2+\av{p_3}^{4/3} +\av{p_1}^4},
\\
\av{\pa_{p_1} N_1(\mf p_4)}&\lesssim\sum_{r\ge2} \bra{\frac{1}{(r-2)!}\av{p_0}^{r-2}\av{p_3}+\frac{1}{(r-3)!}\av{p_0}^{r-3}\av{p_2}\av{p_1}+\frac{1}{(r-4)!}\av{p_0}^{r-4}\av{p_1}^3}\\
&= e^{\av{p_0}}\bra{\av{p_3}+\av{p_2}\av{p_1}+\av{p_1}^3}\\
&\lesssim e^{\av{p_0}}\bra{\av{p_3}+\av{p_2}^{3/2}+\av{p_1}^3},
\\
\av{\pa_{p_2} N_1(\mf p_4)}&\lesssim\sum_{r\ge2} \bra{\frac{1}{(r-2)!}\av{p_0}^{r-2}\av{p_2}+\frac{1}{(r-3)!}\av{p_0}^{r-3} \av{p_1}^2}
= e^{\av{p_0}}\bra{\av{p_2}+\av{p_1}^2},
\\
\av{\pa_{p_3} N_1(\mf p_4)}&\lesssim\sum_{r\ge2} \frac{1}{(r-2)!}\av{p_0}^{r-2}\av{p_1}= e^{\av{p_0}}\av{p_1},\\
\av{\pa_{p_4} N_1(\mf p_4)}&\lesssim\sum_{r\ge2} \frac{1}{(r-1)!}\av{p_0}^{r-1}=e^{\av{p_0}}-1\approx \av{p_0}.
\end{align*}
Hence,  \tbf{(A2)} is verified with  $m_0=m_1=m_2=m_3=0$ and $m_4=1$, and
\[
\si_1^{(j)}(t,x) :=e^{\av{u(t,x)}}\q \t{for}\q j\in\set{0,1,2,3},\qq\sigma_1^{(4)}(t,x):=1.
\]

\subsubsection{Verifying {\rm\tbf{(A1)}}, {\rm\tbf{(A2)}} and {\rm\tbf{(A3)}} for \eqref{eq:18-2}}

 Using the change of variable $u=-1+1/v$, we get the following equivalent formulation of \eqref{eq:18-2}:
\[
\begin{cases}
\begin{array}{ll}
\ds\pat u\,=\,\D^2\pare{\frac{1}{(1+u)^3}}&\qquad \text{ in }\q  \R_+\!\times\,\TT^d\,, \\
\ds u\bigl|_{t=0}\,=\,u_0\,=\,-\,1\,+\,  \frac{1}{v_0} &\qquad \t{ in }\q\, \TT^d\,.
\end{array}
\end{cases}
\]
Let us rewrite the right-hand side as
\[
\D^2\pare{\frac{1}{(1+u)^3}}=\D^2\pare{\sum_{r\ge0}(-1)^r\binom{r+2}{r}u^r}=-3\D^2u+\D^2\pare{\sum_{r\ge2}(-1)^r\binom{r+2}{r}u^r}.
\]
Then, we have $\al=3$ in \eqref{eq:parab} and assumption \tbf{(A3)} is fulfilled. Furthermore, the non-linearity is of the form
\begin{align*}
&N_2(\mf p_4) := \sum_{s=1}^{4}\sum_{r\ge2}(-1)^r\tn_{s}^r(\mf p_4)  ,
\end{align*}
where
\begin{align*}
\tn_{1}^r(\mf p_4)  &:=(r+2)(r+1)rp_0^{r-1}p_4,
\\
\tn_{2}^r(\mf p_4)  &:=(r+2)(r+1)r(r-1)p_0^{r-2}\left[p_2^2+p_3p_1\right],
\\
\tn_{3}^r(\mf p_4)  &:=(r+2)(r+1)r(r-1)(r-2)p_0^{r-3}p_2p_1^2,
\\
\tn_{4}^r(\mf p_4)  &:=(r+2)(r+1)r(r-1)(r-2)(r-3)p_0^{r-4}p_1^4.
\end{align*}
Using that
\begin{align*}
\sum_{r\ge2} (r+2)(r+1)r\,w^{r-1}&
=\frac{3!}{(1-w)^4}-3!\,,
\\
\sum_{r\ge k} (r+2)(r+1)\ldots(r-k+1)\,w^{r-k}&
=\frac{(k+2)!}{(1-w)^{k+3}}\quad\text{for}\quad k\ge2\,,
\end{align*}
we obtain the following estimates:
\begin{align*}
\av{\sum_{r \ge2} (-1)^r\tn_{1}^r(\mf p_4)}&\lesssim\pare{\frac{1}{(1-\av{p_0})^4}-1}\av{p_4}\lesssim\frac{1}{(1-\av{p_0})^4}\av{p_4} \av{p_0}  ,
\\
\av{\sum_{r \ge2} (-1)^r\tn_{2}^r(\mf p_4)}&\lesssim \frac{1}{(1-\av{p_0})^5}\left[|p_2|^2+ \av{p_3}\av{p_1} \right]\lesssim \frac{1}{(1-\av{p_0})^5}\left[|p_2|^2+ \av{p_3}^{4/3}+\av{p_1}^4 \right],
\\
\av{\sum_{r \ge2} (-1)^r\tn_{3}^r(\mf p_4)}&\lesssim \frac{1}{(1-\av{p_0})^6}\av{p_2}\av{p_1}^2  \lesssim \frac{1}{(1-\av{p_0})^6}\bra{\av{p_2}^2+\av{p_1}^4 }
,\\
\av{\sum_{r \ge2} (-1)^r\tn_{4}^r(\mf p_4)}&\lesssim \frac{1}{(1-\av{p_0})^7}\av{p_1}^4 .
\end{align*}
Let
\begin{align*}
\sigma_0(t,x) &:= \frac{1}{(1-\av{u(t,x)})^4}+\frac{1}{(1-\av{u(t,x)})^5}+\frac{1}{(1-\av{u(t,x)})^6}
+\frac{1}{(1-\av{u(t,x)})^7}.
\end{align*}
Then,  $N_2$ verifies \tbf{(A1)} with $\ell_1=\ell_2=\ell_3=0$ and $\ell_4=1$.\\

We now estimate the derivatives of $N_2$ in the $p_j$-variables and obtain:
\begin{align*}
\av{\pa_{p_0} N_2(\mf p_4)}&\lesssim\sum_{r\ge2} \biggl[
(r+2)(r+1)r(r-1)\av{p_0}^{r-2}\av{p_4}\\
&\qq
+(r+2)(r+1)r(r-1)(r-2)\av{p_0}^{r-3}\left[\av{p_2}^2+\av{p_3}\av{p_1}\right]\\
&\qq
+(r+2)(r+1)r(r-1)(r-2)(r-3)\av{p_0}^{r-4}\av{p_2}\av{p_1}^2\\
&\qq
+(r+2)(r+1)r(r-1)(r-2)(r-3)(r-4)\av{p_0}^{r-5}\av{p_1}^4
\biggr]\\
&\lesssim
\frac{1}{(1-\av{p_0})^5}\av{p_4}
+\frac{1}{(1-\av{p_0})^6}\left[\av{p_2}^2+\av{p_3}\av{p_1}\right]
+\frac{1}{(1-\av{p_0})^7}\av{p_2}\av{p_1}^2\\
&\qq
+\frac{1}{(1-\av{p_0})^8}\av{p_1}^4
 \\
&\lesssim
\frac{1}{(1-\av{p_0})^5}\av{p_4}
+\frac{1}{(1-\av{p_0})^6}\left[\av{p_2}^2+\av{p_3}^{4/3}+\av{p_1}^4\right]
\\
&\qq+\frac{1}{(1-\av{p_0})^7}\bra{\av{p_2}^2+\av{p_1}^4}
+\frac{1}{(1-\av{p_0})^8}\av{p_1}^4
,
\\
\av{\pa_{p_1} N_2(\mf p_4)}&\lesssim\sum_{r\ge2} \biggl[
(r+2)(r+1)r(r-1)\av{p_0}^{r-2}\av{p_3}\\
&\qq
+(r+2)(r+1)r(r-1)(r-2)\av{p_0}^{r-3}\av{p_2}\av{p_1}\\
&\qq
+(r+2)(r+1)r(r-1)(r-2)(r-3)\av{p_0}^{r-4}\av{p_1}^3
\biggr]\\
&=
\frac{1}{(1-\av{p_0})^5}\av{p_3}+\frac{1}{(1-\av{p_0})^6}\av{p_2}\av{p_1}+\frac{1}{(1-\av{p_0})^7}\av{p_1}^3
\\
&\lesssim
\frac{1}{(1-\av{p_0})^5}\av{p_3}+\frac{1}{(1-\av{p_0})^6}\bra{\av{p_2}^{3/2}+\av{p_1}^3}+\frac{1}{(1-\av{p_0})^7}\av{p_1}^3
,
\\
\av{\pa_{p_2} N_2(\mf p_4)}&\lesssim\sum_{r\ge2} \biggl[(r+2)(r+1)r(r-1)\av{p_0}^{r-2}\av{p_2}\\
&\qq
+(r+2)(r+1)r(r-1)(r-2)\av{p_0}^{r-3}\av{p_1}^2
\biggr]
\\
&=
\frac{1}{(1-\av{p_0})^5}\av{p_2}+\frac{1}{(1-\av{p_0})^6}\av{p_1}^2
 ,
\\
\av{\pa_{p_3} N_2(\mf p_4)}&\lesssim\sum_{r\ge2} (r+2)(r+1)r(r-1)\av{p_0}^{r-2}\av{p_1}=  \frac{1}{(1-\av{p_0})^5}\av{p_1},\\
\av{\pa_{p_4} N_2(\mf p_4)}&\lesssim\sum_{r\ge2} (r+2)(r+1)r\av{p_0}^{r-1}\simeq \pare{\frac{1}{(1-\av{p_0})^4}-1} \approx \frac{1}{(1-\av{p_0})^4}\av{p_0}.
\end{align*}
Let
\[
\sigma_1^{(j)}(t,x)  :=\sum_{k=1}^{4-j} \frac{1}{(1-\av{u(t,x)})^{8-j-k+1}} \q\t{for}\q j\in\set{0,1,2,3},\qq\sigma_1^{(4)}(t,x) :=\frac{1}{(1-\av{u(t,x)})^4}.
\]
Then,  $N_2$ verifies \tbf{(A2)} with $m_0=m_1=m_2=m_3=0$ and $m_4=1$.

\subsection{Verifying the assumptions for the model from \citep{magliocca2024fourth}} \label{ss:two-op}
The  physical model studied in \citep{magliocca2024fourth}   (and proposed in \citep{khenner2018modeling} when $d=1$)
describes controlled solid-state dewetting processes, and its generalisation to $d>1$ is given by
\begin{equation}\label{eq:24-0}
\begin{cases}
\begin{array}{ll}
\ds\pat u\,=\,-V M\,g_0\,\D^2u\,-\,
V M\,\D\left(G(u)\D u- G'(u) \right)&\qquad \text{ in }\q  \R_+\!\times\,\TT^d\,,  \\
\ds u\bigl|_{t=0}\,=\,u_0 &\qquad \t{ in }\q\, \TT^d\,,
\end{array}
\end{cases}
\end{equation}
\[
\t{where}\qq
G(u) := -\frac{\tilde{c}_1n}{n+u}+\frac{\tilde{c}_2n^2}{(n+u)^2}\qq\text{and}\qq V,\,M,\,g_0,\,\tilde{c}_1,\,\tilde{c}_2,\,n\in\;]0,+\infty[.
\]
Hence, in this case, we have $\al=VM\,g_0$ in \eqref{eq:parab} and the non-linearity satisfies assumption \tbf{(A3)}.
Performing the adimensionalisation in \citep[Section 1.2]{magliocca2024fourth}, the equation in \eqref{eq:24-0} can be equivalently written as
\begin{align}
&\ds  \pat u+(1+\cd -\cu )u,_{iijj}-2 (3\cd-\cu)u,_{jj}\nonumber\\
&=
-\sum_{r\ge1} (-1)^r\,
{\tn}_{0}^r-
\sum_{r \ge0} (-1)^r\pare{{\tn}_{1}^r+{\tn}_{2}^r}
+\sum_{r\ge1} (-1)^r\,
{\tn}_{3}^r
+\sum_{r \ge0} (-1)^r {\tn}_{4}^r
,\label{eqqq}
\end{align}
for
\begin{align*}
\tn_{0}^r&:=
[-\cu +(r+1)\cd ]\,u^ru,_{iijj},
\\
\tn_{1}^r&:=
[\cu (r+1)-\cd (r+2)(r+1)]\,u^r
\,(2u,_ju,_{iij}+u,_{jj}u,_{ii}),
\\
\tn_{2}^r&:=
[-\cu (r+2)(r+1)+\cd (r+3)(r+2)(r+1)]
\,u^r
\,u,_ju,_ju,_{ii} ,
\end{align*}
and
\begin{align*}
\tn_{3}^r&:= [-\cu (r+2)(r+1)+\cd (r+3)(r+2)(r+1)]\,
u^r
u,_{jj},
\\
\tn_{4}^r&:= [\cu (r+3)(r+2)(r+1)-\cd (r+4)(r+3)(r+2)(r+1)]\,
u^r
u,_ju,_j.
\end{align*}
The assumptions contained in \citep{magliocca2024fourth} ensure that
\[
1-\cu+\cd>0\qq\t{and}\qq 3\cd-\cu>0,
\]
hence, in this case, we take $\al=\al_1:=1-\cu+\cd$ in \eqref{eq:parab}, and we set $\al_2:=2(3c_2-c_1)$.
The existence of solutions $u>0$ with $\norm{u_0}_{A^0}<1$ and verifying $\norm{u(t)}_{L^\infty}\lesssim\norm{u_0}_{A^0}$ is proved in \citep[Theorem 2.2]{magliocca2024fourth}.
It is without lack of generality that, for the sake of clarity, we set $c_1=c_2=1$.

In this case, we have two non-linear terms of fourth and second-order.
With a slight abuse of notation, we represent them as
\begin{align*}
N_1(\mf p_4)&:=-\sum_{r\ge1} (-1)^r\,
{\tn}_{0}^r(\mf p_4)-
\sum_{r \ge0} (-1)^r\pare{{\tn}_{1}^r(\mf p_4)+{\tn}_{2}^r(\mf p_4)} ,\\
N_2(\mf p_2)&:=\sum_{r\ge1} (-1)^r\,
{\tn}_{3}^r(\mf p_2)
+\sum_{r \ge0} (-1)^r {\tn}_{4}^r(\mf p_2) ,
\end{align*}
with
\begin{align*}
\tn_{0}^r(\mf p_4)&:=
\bra{-1 +(r+1)}p_0^rp_4,
\\
\tn_{1}^r(\mf p_4)&:=
\bra{(r+1)- (r+2)(r+1)}p_0^r\pare{p_1p_3+p_2^2},
\\
\tn_{2}^r(\mf p_4)&:=
\bra{- (r+2)(r+1)+ (r+3)(r+2)(r+1)}p_0^rp_1^2p_2 ,
\end{align*}
and
\begin{align*}
\tn_{3}^r(\mf p_2) &:=\bra{- (r+2)(r+1)+ (r+3)(r+2)(r+1)} p_0^r p_2,
\\
\tn_{4}^r(\mf p_2) &:= \bra{(r+3)(r+2)(r+1)- (r+4)(r+3)(r+2)(r+1)}p_0^rp_1^2.
\end{align*}

We are going to prove that:
\begin{itemize}[leftmargin=1.3em,itemsep=0.25em]
\item the non-linearity $N_1$ fulfills \tbf{(A1)} and \tbf{(A2)};
\item the non-linearity $N_2$ fulfills \tbf{(A1.k)} and \tbf{(A2.k)} with $k=1$ (see  Subsection \ref{sec:2m}).
\end{itemize}

We estimate
\begin{align*}
\av{\sum_{r \ge1} (-1)^r\tn_{0}^r(\mf p_4)}&\lesssim \av{p_4}
 \av{\sum_{r \ge1} [-1 +(r+1) ]\,\av{p_0}^r} = \frac{1}{(1-\av{p_0})^2}\av{p_4}\av{p_0}  ,\\
\av{\sum_{r \ge0} (-1)^r\tn_{1}^r(\mf p_4)}&\lesssim
 \pare{ \av{p_1}\av{p_3}+ \av{p_2}^2}  \av{\sum_{r \ge0}
[ (r+1)- (r+2)(r+1)] \av{p_0}^r
}\\
&=   \frac{1+\av{p_0}}{(1-\av{p_0})^3}\pare{ \av{p_1}^4+\av{p_3}^{4/3}+ \av{p_2}^2} ,\\
\av{\sum_{r \ge0} (-1)^r\tn_{2}^r(\mf p_4)}&\lesssim \av{p_1}^2\av{p_2}
\av{\sum_{r \ge0}
[- (r+2)(r+1)+ (r+3)(r+2)(r+1)]\av{p_0}^r
}\\
&\lesssim  \frac{1+\av{p_0}}{(1-\av{p_0})^4}\pare{\av{p_1}^4+\av{p_2}^2 },\\
\av{\sum_{r \ge1} (-1)^r\tn_{3}^r(\mf p_2)}&\lesssim
\av{p_2} \av{\sum_{r \ge1}[-  (r+2)(r+1)+  (r+3)(r+2)(r+1)]
\av{p_0}^r}\\
&\lesssim \frac{1}{(1-\av{p_0})^4}\av{p_2}\av{p_0} ,\\
\av{\sum_{r \ge0} (-1)^r\tn_{4}^r(\mf p_2)}&\lesssim\av{p_1}^2
 \av{\sum_{r \ge0}
[ (r+3)(r+2)(r+1)- (r+4)(r+3)(r+2)(r+1)]\av{p_0}^r
}\\
&\lesssim\frac{1+\av{p_0}}{(1-\av{p_0})^5}\av{p_1}^2   .
\end{align*}
Let
\begin{align*}
\si_{0,1}(t,x)&:=
\frac{1}{(1-u(t,x))^2}+
\frac{1+u(t,x)}{(1-u(t,x))^3}+
\frac{1+u(t,x)}{(1-u(t,x))^4} ,\\
\si_{0,2}(t,x)&:=
\frac{1}{(1-u(t,x))^4}+
\frac{1+u(t,x)}{(1-u(t,x))^5}.
\end{align*}
Then, we
  easily deduce that the non-linearities $N_1$ and $N_2$ verify, respectively, \tbf{(A1)} with $\si_{0,1}(t,x)$, $\ell_1=\ell_2=\ell_3=0$ and $\ell_4=1$, and \tbf{(A1.1)} with $\si_{0,2}(t,x)$, $\ell_1=0$ and $\ell_2=1$.

\medskip

We now need to find suitable estimates on the derivatives of both $N_1$ and $N_2$ in the $p_j$-variables:
\begin{align*}
\av{\pa_{p_0} N_1(\mf p_4)}&\lesssim\sum_{r\ge1}\biggl[   \bra{-r+(r+1)r}\av{p_0}^{r-1}\av{p_4}
\\
&\qq+
\bra{(r+1)r-(r+2)(r+1)r}\av{p_0}^{r-1}\left[\av{p_2}^2+\av{p_3}\av{p_1}\right]\\
&\qq
+\bra{-(r+2)(r+1)r+(r+3)(r+2)(r+1)r}\av{p_0}^{r-1}\av{p_2}\av{p_1}^2
\biggr]\\
&\lesssim \frac{1+\av{p_0}}{(1-\av{p_0})^3}\av{p_4}+\frac{1+\av{p_0}}{(1-\av{p_0})^4}\left[\av{p_2}^2+\av{p_3}\av{p_1}\right]+\frac{1+\av{p_0}}{(1-\av{p_0})^5}\av{p_2}\av{p_1}^2
 \\
&\lesssim  \frac{1+\av{p_0}}{(1-\av{p_0})^3}\av{p_4}+\frac{1+\av{p_0}}{(1-\av{p_0})^4}\left[\av{p_2}^2+\av{p_3}^{4/3}+\av{p_1}^4\right]\\
&\qq+\frac{1+\av{p_0}}{(1-\av{p_0})^5}\bra{\av{p_2}^2+\av{p_1}^4}
,
\\
\av{\pa_{p_1} N_1(\mf p_4)}&\lesssim\sum_{r\ge0} \biggl[
[ (r+1)- (r+2)(r+1)]\,\av{p_0}^r\av{p_3}\\
&\qq+[- (r+2)(r+1)+ (r+3)(r+2)(r+1)]
\,\av{p_0}^r
\,\av{p_1} \av{p_2}
\biggr]\\
&=   \frac{1+\av{p_0}}{(1-\av{p_0})^3}\av{p_3}+\frac{1+\av{p_0}}{(1-\av{p_0})^4}\av{p_1} \av{p_2}
\\
&\lesssim  \frac{1+\av{p_0}}{(1-\av{p_0})^3}\av{p_3}+\frac{1+\av{p_0}}{(1-\av{p_0})^4}\bra{\av{p_1}^3+ \av{p_2}^{3/2}}
,
\\
\av{\pa_{p_2} N_1(\mf p_4)}&\lesssim\sum_{r\ge0} \biggl[[ (r+1)- (r+2)(r+1)]\,\av{p_0}^r
\,\av{p_2}\\
&\qq
+[- (r+2)(r+1)+ (r+3)(r+2)(r+1)]
\,\av{p_0}^r
\,\av{p_1}^2
\biggr]
\\
&= \frac{1+\av{p_0}}{(1-\av{p_0})^3}\av{p_2}+\frac{1+\av{p_0}}{(1-\av{p_0})^4}\av{p_1}^2
 ,
\\
\av{\pa_{p_3} N_1(\mf p_4)}&\lesssim\sum_{r\ge0} [ (r+1)- (r+2)(r+1)]\,\av{p_0}^r
\,\av{p_1}= \frac{1+\av{p_0}}{(1-\av{p_0})^3}\av{p_1} ,\\
\av{\pa_{p_4} N_1(\mf p_4)}&\lesssim\sum_{r\ge1} [-1 +(r+1) ]\,\av{p_0}^r= \frac{\av{p_0}}{(1-\av{p_0})^2} ,
\end{align*}
and
\begin{align*}
\av{\pa_{p_0} N_2(\mf p_2)}&\lesssim\sum_{r\ge1} \biggl[
\bra{- (r+2)(r+1)r+ (r+3)(r+2)(r+1)r} \av{p_0}^{r-1} \av{p_2}\\
&\qq
\bra{(r+3)(r+2)(r+1)r- (r+4)(r+3)(r+2)(r+1)r}\av{p_0}^{r-1}\av{p_1}^2
\biggr]\\
&\lesssim  \frac{1+\av{p_0}}{(1-\av{p_0})^5}\av{p_2}+\frac{1+\av{p_0}}{(1-\av{p_0})^6}\av{p_1}^2,
\\
\av{\pa_{p_1} N_2(\mf p_2)}&\lesssim\sum_{r\ge0}  \bra{(r+3)(r+2)(r+1)- (r+4)(r+3)(r+2)(r+1)}\av{p_0}^r\av{p_1}\\
&\lesssim  \frac{1+\av{p_0}}{(1-\av{p_0})^5}\av{p_1}
,
\\
\av{\pa_{p_2} N_2(\mf p_2)}&\lesssim\sum_{r\ge1} \bra{- (r+2)(r+1)+ (r+3)(r+2)(r+1)} \av{p_0}^r
\\
&\lesssim  \frac{\av{p_0}}{(1-\av{p_0})^4}  .
\end{align*}
We then set
\begin{align*}
\si_{1,1}^{(0)}(t,x)&:= \frac{1+u(t,x)}{(1-u(t,x))^3}+\frac{1+u(t,x)}{(1-u(t,x))^4}+\frac{1+u(t,x)}{(1-u(t,x))^5}
,\\
\si_{1,1}^{(1)}(t,x)&:= \si_{1,1}^{(2)}(t,x)=\frac{1+u(t,x)}{(1-u(t,x))^3}+\frac{1+u(t,x)}{(1-u(t,x))^4}
,\\
\si_{1,1}^{(3)}(t,x)&:= \frac{1+u(t,x)}{(1-u(t,x))^3}
,\\
\si_{1,1}^{(4)}(t,x)&:=\frac{1}{(1-u(t,x))^2},
\end{align*}
  and
\begin{align*}
\si_{1,2}^{(0)}(t,x)&:=  \frac{1+u(t,x)}{(1-u(t,x))^5}+\frac{1+u(t,x)}{(1-u(t,x))^6}
,\\
\si_{1,2}^{(1)}(t,x)&:=  \frac{1+u(t,x)}{(1-u(t,x))^5}
,\\
\si_{1,2}^{(2)}(t,x)&:=  \frac{1}{(1-u(t,x))^4},
\end{align*}
  and  deduce that the non-linearities $N_1$ and $N_2$ verify, respectively,  \tbf{(A2)} with $\si_{1,1}^{(j)}(t,x)$, $m_0=m_1=m_2=m_3=0$ and $m_4=1$, and \tbf{(A2.1)} with $\si_{1,2}^{(j)}(t,x)$, $m_0=m_1=0$ and $m_2=1$.

\subsection{The model considered in \citep{granero2024global}}

The first model studied in \citep{granero2024global} related to the epitaxial growth is given by
\begin{equation}\label{eq:24-1}
\begin{cases}
\begin{array}{ll}
\ds\pat u
\,=\,
K_0\,\D u\,+\,2K_1\,\det D^2u\,-\,K_2\,\D^2u
 & \qquad \text{ in }\q  \R_+\!\times\,\TT^2\,, \\
\ds u\bigl|_{t=0}\,=\,u_0 &\qquad\t{ in }\q\, \TT^2\,,
\end{array}
\end{cases}
\end{equation}
with $K_0,K_1\ge0$ and $K_2>0$.

Its geometrical derivation can be found in \citep{escudero2023explicit,escudero2015global} and  \cite[Section 2]{escudero2013radial} (where the meaning of the constants $K_i$ is explained).

The existence of solutions to \eqref{eq:24-1} with $K_2-2K_1\norm{u_0}_{A^0}>1$ and verifying $\norm{u(t)}_{L^\infty}\lesssim\norm{u_0}_{A^0}$ is proved in \citep[Theorem 1.4]{granero2024global}.

In this case, we have $\al=K_2$ in \eqref{eq:parab}, and by an easy factorisation of the term depending on the factor $K_1$, one can check that assumption \tbf{(A3)} is satisfied. 

We set
\[
N(u,Du,D^2u):=K_0\D u+2K_1\det D^2u=K_0u,_{ii}+K_1\pare{u,_{ii}u,_{jj}-u,_{ij}u,_{ij}}.
\]
Hence, the non-linear term takes the form
\[
N(\mf p_4)\simeq p_2+p_2^2.
\]
Conditions \tbf{(A1)} and \tbf{(A2)} are easily verified, since
\begin{align*}
\av{N(\mf p_4)}&\lesssim 1+\av{p_2}^2,\\
\av{\pa_{p_2}N(\mf p_4)}&\lesssim 1+\av{p_2},\qq \av{\pa_{p_j}N(\mf p_4)}=0\q\t{for}\q j\in\set{0,1,3,4}.
\end{align*}

Interestingly, from our point of view, we notice that, in this particular case, we need to invoke Remark \ref{rmk:7} in order to use the computations
of Sections \ref{s:proof-unique} and \ref{s:proof-exist}.

\medbreak
To conclude, we point out that a second problem was dealt with in \cite{granero2024global}, namely
\begin{equation*}\label{eq:24-22}
\begin{cases}
\begin{aligned}
\ds\pat u\,&=\,-\,\Delta^2u\,-\,
\nabla u\cdot\nabla\Delta u\,-\,u \Delta^2u &&\\
&\quad \,-\,
\chi q(q-1) (1+u)^{q-2} \big|\nabla u\big|^2\, -\,\chi q (1+u)^{q-1}\,\Delta u &&\qq\text{in}\q  \R_+\!\times\,\TT^2\,, \\[1ex]
\ds u\bigl|_{t=0}\,&=\,v_0-\frac{\norm{v_0}_{L^1}}{4\pi^2}&&\qq  \t{in}\q\, \TT^2\,,
\end{aligned}
\end{cases}
\end{equation*}
with $q\in\NN\setminus\{0\}$ and $\chi>0$.

In this case, the results proved in the previous sections do not directly apply, since the non-linearity does not verify {\rm \tbf{(A1)}} and {\rm \tbf{(A2)}}.
In addition, we cannot argue as in \tsl{e.g.} Subsection \ref{ss:two-op}, as the lower-order elliptic part
$ -\,\chi q (1+u)^{q-1}\,\Delta u$ would appear on the left-hand side with the wrong sign.

However, we remark that the stability proof of Theorem \ref{t:stab} can be repeated with slight modifications (still based on interpolation) also for this problem,
supplementing the results of \cite{granero2024global} with an existence statement in the space $X_{0,4}$, together with the uniqueness of
the constructed solution.

%
%
%
%

\section*{Acknowledgements} 

R. G.-B. and M. M.  are funded by the project ”Análisis Matemático Aplicado y Ecuaciones Diferenciales” Grant
PID2022-141187NB-I00 funded by MCIN /AEI /10.13039/501100011033 / FEDER, UE and acronym ”AMAED”.
This publication is part of the project PID2022-141187NB-I00 funded by MCIN/ AEI /10.13039/501100011033.

M. M. thanks IMUS-Maria de Maeztu grant CEX2024-001517-M - Apoyo a Unidades de Excelencia María de Maeztu for supporting this research, funded by MICIU/AEI/ 10.13039/501100011033. The work of M. M. was funded by the Ministry of Science and Innovation/State Research
Agency/10.13039/501100011033 and by the European Union ”NextGenerationEU/Recovery, Transformation
and Resilience Plan”; the project PID2022-140494NA-I00 funded by MCIN/AEI/10.13039/50 \\
1100011033/FEDER, UE.

The work of F. F. has been partially supported by the project CRISIS (ANR-20-CE40-0020-01), operated by the French National Research Agency (ANR),
by the Basque Government through the BERC 2022-2025 program and by the Spanish State Research Agency through the BCAM Severo Ochoa excellence accreditation
CEX2021-001142. The author also aknowledges the support of the European Union through the COFUND program [HORIZON-MSCA-2022-COFUND-101126600-Sm \\
artBRAIN3].

\end{document}